\newtheorem{theorem}{Theorem}[section]
\newtheorem{lemma}[theorem]{Lemma}
\newtheorem{question}[theorem]{Question}
\newtheorem*{poincare}{Poincar\'e Recurrence Theorem}
\newtheorem*{birkhoff}{Birkhoff's Ergodic Theorem}
\theoremstyle{definition}
\newtheorem{definition}[theorem]{Definition}
\def\dotminussym#1#2{%
  \setbox0=\hbox{$\m@th#1-$}%
  \kern.5\wd0%
  \hbox to 0pt{\hss\hbox{$\m@th#1-$}\hss}%
  \raise.6\ht0\hbox to 0pt{\hss$\m@th#1.$\hss}%
  \kern.5\wd0}
\mathchardef\mhyphen="2D
\newcommand{\ml}{Martin-L\"{o}f }
\newcommand{\reals}{2^\omega}
\newcommand{\strings}{2^{<\omega}}
\newcommand{\re}{c.e.\ }
\begin{document}

\title{Randomness and Non-ergodic Systems}
\date{\today}

\author{Johanna N.Y.\ Franklin}
\address{Department of Mathematics, University of Connecticut U-3009, 196 Auditorium Road, Storrs, CT 06269-3009, USA}
\email{johanna.franklin@uconn.edu}
\urladdr{www.math.uconn.edu/~franklin}

\author{Henry Towsner}
\address {Department of Mathematics, University of Connecticut U-3009, 196 Auditorium Road, Storrs, CT 06269-3009, USA}
\thanks{The second author was partially supported by NSF grant DMS-1157580.}
\email{htowsner@gmail.com}
\urladdr{www.math.uconn.edu/~towsner}

\begin{abstract}
We characterize the points that satisfy Birkhoff's ergodic theorem under certain computability conditions in terms of algorithmic randomness. First, we use the method of cutting and stacking to show that if an element $x$ of the Cantor space is not \ml random, there is a computable measure-preserving transformation and a computable set that witness that $x$ is not typical with respect to the ergodic theorem, which gives us the converse of a theorem by V'yugin. We further show that if $x$ is weakly 2-random, then it satisfies the ergodic theorem for all computable measure-preserving transformations and all lower semi-computable functions.
\end{abstract}

\maketitle
\section{Introduction}

Random points are typical with respect to measure in that they have no measure-theoretically rare properties of a certain kind, while ergodic theorems describe regular measure-theoretic behavior. There has been a great deal of interest in the connection between these two kinds of regularity recently. We begin by defining the basic concepts in each field and then describe the ways in which they are related. Then we present our results on the relationship between algorithmic randomness and the satisfaction of Birkhoff's ergodic theorem for computable measure-preserving transformations with respect to computable (and then lower semi-computable) functions.  Those more familiar with ergodic theory than computability theory might find it useful to first read Section \ref{ergodic_people}, a brief discussion of the notion of algorithmic randomness in the context of ergodic theory.

\subsection{Algorithmic randomness in computable probability spaces}

For a general reference on algorithmic randomness, see \cite{dhbook,dhnt,niesbook}. We will confine our attention to the Cantor space $\reals$ with the Lebesgue measure $\lambda$.  In light of Hoyrup and Rojas' theorem that any computable probability space is isomorphic to the Cantor space in both the computable and measure-theoretic senses \cite{hr09}, there is no loss of generality in restricting to this case.


We present Martin-L\"of's original definition of randomness \cite{ml66}.
\begin{definition}
 An effectively \re sequence $\langle V_i\rangle$ of subsets of $\strings$ is a \emph{\ml test} if $\lambda([V_i])\leq 2^{-i}$ for every $i$. If $x\in\reals$, we say that $x$ is \emph{\ml random} if for every \ml test $\langle V_i\rangle$, $x\not\in\cap_i [V_i]$.
\end{definition}

It is easy to see that $\lambda(\cap_i[V_i])=0$ for any \ml test, and since there are only countably many \ml tests, almost every point is \ml random.

In Section \ref{upcross}, we will also consider weakly 2-random elements of the Cantor space. Weak 2-randomness is a strictly stronger notion than \ml randomness and is part of the hierarchy introduced by Kurtz in \cite{kurtz}.

\begin{definition}
 An effectively \re sequence $\langle V_i\rangle$ of subsets of $\strings$ is a \emph{generalized \ml test} if $\lim_{n\rightarrow\infty} \lambda([V_i]) = 0$. If $x\in\reals$, we say that $x$ is \emph{weakly 2-random} if for every generalized \ml test $\langle V_i\rangle$, $x\not\in\cap_i [V_i]$.
\end{definition}



\subsection{Ergodic theory}

Now we discuss ergodic theory in the general context of an arbitrary probability space before transferring it to the context of a computable probability space. The following definitions can be found in \cite{halmos}.

\begin{definition}
 Suppose $(X,\mu)$ is a probability space, and let $T:X\rightarrow X$ be a measurable transformation.
\begin{enumerate}
 \item $T$ is \emph{measure preserving} if for all measurable $A\subseteq X$, $\mu(T^{-1}(A)) = \mu(A)$.
\item A measurable set $A\subseteq X$ is \emph{invariant} under $T$ if $T^{-1}(A)=A$ modulo a set of measure 0.
\item $T$ is \emph{ergodic} if it is measure preserving and every $T$-invariant measurable subset of $X$ has measure 0 or measure 1.
\end{enumerate}
\end{definition}

One of the most fundamental theorems in ergodic theory is Birkhoff's Ergodic Theorem:
 
\begin{birkhoff} \cite{birkhoff31}
 Suppose that $(X,\mu)$ is a probability space and $T:X\rightarrow X$ is measure preserving.  Then for any $f\in L_1(X)$ and almost every $x\in X$,
\[\lim_{n\rightarrow\infty} \frac{1}{n} \sum_{i<n}f(T^i(x))\]
converges.  Furthermore, if $T$ is ergodic then for almost every $x$ this limit is equal to $\int f\; d\mu$.
\end{birkhoff}

If we restrict ourselves to a countable collection of functions, this theorem gives a natural notion of randomness---a point is random if it satisfies the conclusion of the ergodic theorem for all functions in that collection.  In a computable measure space, we can take the collection of sets defined by a computability-theoretic property and attempt to classify this notion of randomness in terms of algorithmic randomness.  In particular, we are interested in the following property:

\begin{definition}
 Let $(X,\mu)$ be a computable probability space, and let $T:X\rightarrow X$ be a measure-preserving transformation.  Let $\mathcal{F}$ be a collection of functions in $L_1(X)$.
A point $x\in X$ is a \emph{weak Birkhoff point} for $T$ with respect to $\mathcal{F}$ if for every $f\in\mathcal{F}$,
\[\lim_{n\rightarrow\infty} \frac{1}{n} \sum_{i<n}f(T^i(x))\]
converges.  $x$ is a \emph{Birkhoff point} if additionally
\[\lim_{n\rightarrow\infty} \frac{1}{n} \sum_{i<n}f(T^i(x))=\int f\; d\mu.\]
\end{definition}
The definition of a Birkhoff point is only appropriate when $T$ is ergodic; when $T$ is nonergodic, the appropriate notion is that of a weak Birkhoff point.

There are two natural dimensions to consider: the ergodic-theoretic behavior of $T$ and the algorithmic complexity of $\mathcal{C}$.  The case where $T$ is ergodic has been largely settled.

A point is \ml random if and only if the point is Birkhoff for all computable ergodic transformations with respect to lower semi-computable functions \cite{bdms10, fgmn}.  The proof goes by way of a second theorem of ergodic theory:
 \begin{poincare} [\cite{poincare}, Chapter 26]
Suppose that $(X,\mu)$ is a probability space and $T:X\rightarrow X$ is measure preserving. Then for all $E\subseteq X$ of positive measure and for almost all $x\in X$, $T^n(x)\in E$ for infinitely many $n$.
 \end{poincare}
In short, the Poincar\'e Recurrence Theorem says that an ergodic transformation $T$ returns almost every point to every set of positive measure repeatedly, and Birkhoff's Ergodic Theorem says that it will do so with a well-defined frequency in the limit.

A point $x\in X$ is a \emph{Poincar\'e point} for $T$ with respect to $\mathcal{C}$ if for every $E\in\mathcal{C}$ with positive measure, $T^n(x)\in E$ for infinitely many $n$.  In \cite{k85}, Ku\v{c}era proved that a point in the Cantor space is \ml random if and only if it is a Poincar\'e point for the shift operator with respect to effectively closed sets. Later, Bienvenu, Day, Mezhirov, and Shen generalized this result and showed that in any computable probability space, a point is \ml random if and only if it is a Poincar\'e point for computable ergodic transformations with respect to effectively closed sets \cite{bdms10}.  The proof that \ml random points are Poincar\'e proceeds by showing that a point which is Poincar\'e for any computable ergodic transformation with respect to effectively closed sets must also be a Birkhoff point for computable ergodic transformations with respect to lower semi-computable functions \cite{bdms10, fgmn}.

Similarly, G\'acs, Hoyrup, and Rojas have shown that a point is \emph{Schnorr random} if and only if the point is Birkhoff for all computable ergodic transformations with respect to computable functions \cite{ghr11}.  (Recall that $x$ is Schnorr random if $x\not\in\cap_i [V_i]$ for all \ml tests $\langle V_i\rangle$ where $\lambda([V_i])=2^{-i}$ for all $i$; Schnorr randomness is a strictly weaker notion than \ml randomness \cite{schnorr}.)  They also consider the case where there are strong mixing assumptions on $T$ in addition to being ergodic and show that the equivalence with Schnorr randomness still holds.

In this paper, we consider the analogous situations when $T$ is nonergodic.  V'yugin \cite{v97} has shown that if $x\in\reals$ is \ml random then $x$ is weakly Birkhoff for any (not necessarily ergodic) computable measure-preserving transformation $T$ with respect to computable functions.  Our main result is the converse: that if $x$ is not \ml random then $x$ is not weakly Birkhoff for some particular transformation $T$ with respect to computable functions (in fact, with respect to computable sets).

These results can be summarized in Table \ref{table:summary}.

\begin{table}
\setlength{\extrarowheight}{1.5pt}
\begin{tabular}{c|c|c}
 & \multicolumn{2}{c}{Transformations} \\
 Sets & Ergodic & Nonergodic\\
\hline
Computable& Schnorr & \ml\\
 &\cite{ghr11} & \cite{v97}+Theorem \ref{main_thm}\\
\hline
Lower semi-computable &\ml&?\\
& \cite{bdms10, fgmn}&\\

\end{tabular}
\label{table:summary}
\end{table}

This says that a point is weakly Birkhoff for the specified family of computable transformations with respect to the specified collection of functions if and only if it is random in the sense found in the corresponding cell of the table.

We also begin an analysis of the remaining space in the table; we give an analog of V'yugin's result, showing that if $x$ is weakly $2$-random then $x$ is a weak Birkhoff point for all computable measure-preserving transformations with respect to lower semi-computable functions. 

The next two sections will be dedicated to a discussion of the techniques we will use in our construction. Section \ref{notation} contains a description of the type of partial transformations we will use to construct the transformation $T$ mentioned above, and Section \ref{lemmas} discusses our methods for building new partial transformations that extend other such transformations. We combine the material from these two sections to prove our main theorem in Section \ref{main}, while Section \ref{upcross} contains a further extension of our work and some speculative material on a more relaxed form of upcrossings. Section \ref{ergodic_people} is a general discussion of algorithmic randomness intended for ergodic theorists.

\section{Notation and Diagrams}\label{notation}

We will build computable transformations $\widehat T:2^\omega\rightarrow 2^\omega$ using computable functions $T:2^{<\omega}\rightarrow 2^{<\omega}$ such that (1) $\sigma\subseteq\tau$ implies $T(\sigma)\subseteq T(\tau)$ and (2) $\widehat T(x)=\lim_{n\rightarrow\infty}T(x\upharpoonright n)$ is defined and infinite for all $x\in 2^\omega$ outside a computable $\mathcal{G}_\delta$ set with measure $0$.

We will approximate such a $\widehat T$ by \emph{partial transformations}:
\begin{definition}
A \emph{partial transformation} is a computable function $T:2^{<\omega}\rightarrow 2^{<\omega}$ such that if $\sigma\subseteq\tau$ and $T(\tau)$ is defined then $T(\sigma)$ is defined and $T(\sigma)\subseteq T(\tau)$.  We write $T\subseteq T'$ if for all $\sigma$, $T(\sigma)\subseteq T'(\sigma)$.
\end{definition}

We will be exclusively interested in partial transformations which are described finitely in a very specific way:
\begin{definition}
A partial transformation $T$ is \emph{proper} if there are finite sets $T_-,T_+$ such that:
\begin{itemize}
\item $T_-\cup T_+$ is prefix-free,
\item $\cup_{\sigma\in T_-\cup T_+}[\sigma]=2^\omega$,
\item If there is a $\tau\sqsubseteq\sigma$ such that $\tau\in T_-$ then $T(\sigma)=T(\tau)$,
\item If $\sigma=\tau^\frown\rho$ with $\tau\in T_+$ then $T(\sigma)=T(\tau)^\frown\rho$,
 \item If $\sigma\in T_-$ then $|T(\sigma)|<|\sigma|$,
  \item If $\sigma\in T_+$ then $|T(\sigma)|=|\sigma|$,
  \item If $\sigma\in T_+$ and $\sigma\neq\tau\in T_+\cup T_-$ then $T(\tau)\not\supseteq T(\sigma)$.
\end{itemize}
We say $\sigma$ is \emph{determined} in $T$ if for some such $T_-,T_+$, some initial segment of $\sigma$ belongs to $T_-\cup T_+$.


\end{definition}

The roles of $T_-$ and $T_+$ will be clearer when we introduce a diagrammatic notion for describing transformations.  For now, note that a proper transformation is defined by the finite sets $T_-$ and $T_+$ together with the finitely many values of $T$ on these sets.

Throughout this paper, $T$ is always assumed to be proper and measure preserving.


We will use the method of cutting and stacking, which was introduced by Chacon to produce dynamical systems with specific combinatorial properties \cite{chacon:MR0247028,chacon:MR0207954}\footnote{Actually, according to \cite{friedman:MR1140275}, the method was first used several decades earlier by von Neumann and Kakutani, but not published until later \cite{kakutani:MR0396911}.}.  One tries to construct a dynamical system, usually on the real interval $[0,1]$, by specifying the transformation in stages.  At a given stage, the interval has been ``cut'' into a finite number of components, some of which have been ``stacked'' into ``towers'' or ``ladders.''  A tower is read upwards, so the interval on the bottom level is mapped by the transformation to the level above, and from that level to the level above that.  On the top level of a tower, the transformation is not yet defined.  To produce the next stage, the towers are cut into smaller towers and further stacked.  By manipulating the order in which the components are stacked, specific properties of the transformation can be enforced.  This method has been extensively used in ergodic theory and probability theory to construct examples with specific properties (some overviews of the area are \cite{shields:MR1134300,friedman:MR1140275,katok:MR2186251}).

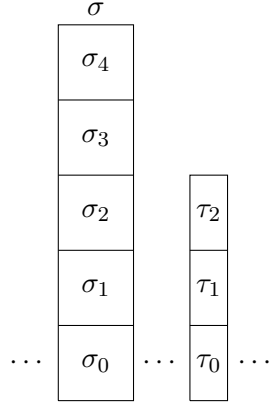
\begin{figure}
\begin{center}
\begin{tikzpicture}
\node[left] at (0,0.5) {$\cdots$};
\node at (1.375,0.5) {$\cdots$};
\node[right] at (2.25,0.5) {$\cdots$};
\draw (0,0) -- (1,0) -- (1,5) -- (0,5) -- (0,0);
\draw (0,1) -- (1,1);
\draw (0,2) -- (1,2);
\draw (0,3) -- (1,3);
\draw (0,4) -- (1,4);
\node at (0.5,0.5) {$\sigma_0$};
\node at (0.5,1.5) {$\sigma_1$};
\node at (0.5,2.5) {$\sigma_2$};
\node at (0.5,3.5) {$\sigma_3$};
\node at (0.5,4.5) {$\sigma_4$};
\node[above] at (0.5,5) {$\sigma$};
\draw (1.75,0) -- (2.25,0) -- (2.25,3) -- (1.75,3) -- (1.75,0);
\draw (1.75,1) -- (2.25,1); 
\draw (1.75,2) -- (2.25,2); 
\node at (2,0.5) {$\tau_0$};
\node at (2,1.5) {$\tau_1$};
\node at (2,2.5) {$\tau_2$};
\end{tikzpicture}
\end{center}
\caption{A typical diagram}
\label{demo_figure}
\end{figure}

A typical diagram is shown in Figure \ref{demo_figure}.  This figure represents that $|\sigma|<|\sigma_0|=|\sigma_1|=|\sigma_2|=|\sigma_3|=|\sigma_4|$ and that for all $\upsilon$, $T(\sigma_i{}^\frown\upsilon)=\sigma_{i+1}{}^\frown\upsilon$ for $i<4$, $T(\sigma_4{}^\frown\upsilon)=\sigma$, and similarly $T(\tau_i{}^\frown\upsilon)=T(\tau_{i+1}{}^\frown\upsilon)$ for $i<2$ while $T(\tau_2)=\langle\rangle$.  Although it is not essential to interpret the diagrams, we will try to be consistent about the scale of blocks; in Figure \ref{demo_figure}, the relative width of the blocks suggests that $|\tau_i|=|\sigma_i|+1$---that is, $\mu([\tau_i])=\mu([\sigma_i])/2$; the height of a block does not represent anything, so we draw each blocks with the same height.  The only relevant dimensions are the widths of the blocks and the numbers of blocks in the towers.  In fact, proper partial transformations can always be represented by diagrams where all blocks have the same width, but it is useful to consider intermediate diagrams where blocks vary in width.

In general, a block represents a subset of $2^\omega$ of the form $[\sigma]$ for some sequence $\sigma$; by placing the block corresponding to $[\sigma]$ on top of the block corresponding to $[\tau]$, we are indicating that $\tau\in T_+$ and $T(\tau)=\sigma$---that is, that in the transformation we construct extending $T$, $T([\tau])=[\sigma]$.  (We must, therefore, have $|\sigma|=|\tau|$.)  By placing some sequence $\sigma'$ with $|\sigma'|<|\sigma|$ on top of the block corresponding to $[\sigma]$, we are indicating that $\sigma\in T_-$ and $T(\sigma)=\sigma'$---that is, in the transformation we construct extending $T$, $T([\sigma])\subseteq[\sigma']$.

The roles of $T_-$ and $T_+$ in the specification of a proper transformation are now clearer: the elements of $T_-\cup T_+$ are the particular blocks labeled in a given diagram; the elements $\tau\in T_+$ are those blocks which have another block on top, and therefore $T$ is completely defined on every element of $[\tau]$.  The elements $\tau\in T_-$ are topmost blocks of some tower, for which we have (at most) partial information about $T$ on $[\tau]$.

We will only be concerned with partial transformations satisfying two additional properties.
\begin{definition}
An \emph{open loop} in a partial transformation $T$ is a sequence $\sigma_0,\ldots,\sigma_n$ such that:
\begin{itemize}
  \item $|\sigma_0|=|\sigma_1|=\cdots=|\sigma_n|$,
  \item $T(\sigma_i)=\sigma_{i+1}$ for $i<n$,
  \item $T(\sigma_n)\sqsubset \sigma_0$.
\end{itemize}

The \emph{width} of a loop is the value $2^{-|\sigma_i|}$, and the \emph{volume} of a loop is $n2^{-|\sigma_i|}$.

We say $T$ is \emph{partitioned into open loops} if for every $\sigma$ there is an open loop $\sigma_0,\ldots,\sigma_n$ in $T$ with $\sigma=\sigma_i$ for some $i$.  (In a proper transformation such a loop must be unique.)  In such a transformation we write $\mathcal{L}_T(\sigma)$ for the loop $\sigma_0,\ldots,\sigma_n$ such that for some $i$, $\sigma=\sigma_i$.  We write $\iota_T(\sigma)$ for this value of $i$.

We say $\tau$ is \emph{blocked} if there is any $\sigma$ such that $T(\sigma)\sqsupset\tau$.  Otherwise we say $\tau$ is \emph{unblocked}.

\end{definition}
(We are interested in open loops to preclude the possibility that $T(\sigma_n)=\sigma_0$, since we are not interested in---indeed, will not allow the existence of---``closed'' loops.)  Diagrammatically, the requirement that $T$ be partitioned into open loops is represented by requiring that any sequence written above a tower of blocks is a subsequence of the sequence at the bottom of that tower.  (For instance, in Figure \ref{demo_figure}, we require that $\sigma\sqsubset\sigma_0$.)



\begin{definition}
An \emph{escape sequence} for $\sigma$ in $T$ is a sequence $\sigma_0,\ldots,\sigma_n$ such that:
\begin{itemize}
\item $\sigma_{0}=\sigma$,
\item $|\sigma_1|=|\sigma_2| = \ldots = |\sigma_n|$,
\item For all $0\leq i<n$, $\sigma_{i+1}\sqsupseteq T(\sigma_i)$,
\item If $\sigma_{i+1}$ is blocked then $\sigma_{i+1}=T(\sigma_i)$,
\item $T(\sigma_n)=\langle\rangle$,
\item All $\sigma_i$ are determined.
\end{itemize}

We say $T$ is \emph{escapable} if for every determined $\sigma$ with $|T(\sigma)|<|\sigma|$, there is an escape sequence for $\sigma$.

An escape sequence is \emph{reduced} if the following two conditions hold: (1) $\sigma_i\supseteq T(\sigma_j)$ implies that 
either $i\leq j+1$ or $\sigma_i$ is blocked, and (2) if $i<n$, then $T(\sigma_i)\neq\langle\rangle$.
\end{definition}
Escapability preserves the option of extending $T$ in such a way that we can eventually map $[\sigma_0]$ to anything not already in the image of another sequence (although it may require many applications of $T$).

\begin{lemma}
Every escape sequence for $\sigma$ contains a reduced subsequence for $\sigma$.
\label{escape_extend}
\end{lemma}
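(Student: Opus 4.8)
The plan is to produce the required subsequence by two kinds of surgery on the given escape sequence $\sigma_0,\ldots,\sigma_n$: first truncate a suffix to secure clause (2) of reducedness, then repeatedly excise blocks of intermediate terms to secure clause (1). The one thing to monitor is that each surgery again yields an escape sequence for $\sigma$ — in particular that it respects the clause ``if $\sigma_{i+1}$ is blocked then $\sigma_{i+1}=T(\sigma_i)$'' — and here it helps that being blocked is a property of the fixed transformation $T$, not of the sequence, so it is unaffected by the surgery.

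To arrange (2), let $n'$ be least with $T(\sigma_{n'})=\langle\rangle$; this exists with $n'\le n$ since $T(\sigma_n)=\langle\rangle$. Every clause in the definition of an escape sequence is inherited by an initial segment, so $\sigma_0,\ldots,\sigma_{n'}$ is again an escape sequence for $\sigma$, and by minimality of $n'$ it satisfies (2). Replace the sequence by this truncation and rename $n:=n'$.

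To arrange (1), call a pair $j<i$ of indices of the current sequence \emph{bad} if $i\ge j+2$, $\sigma_i\sqsupseteq T(\sigma_j)$, and $\sigma_i$ is unblocked. While a bad pair exists, choose one, delete the terms $\sigma_{j+1},\ldots,\sigma_{i-1}$, and re-index. I would then check that the result is still an escape sequence for $\sigma$: the first term is still $\sigma$; the terms after the first still share a common length (the retained terms other than $\sigma$ all came from positions $\ge 1$); every term is still determined; the last term is still $\sigma_n$, which is never deleted since $i\le n$, so $T$ of the last term is still $\langle\rangle$; and the one new consecutive pair $(\sigma_j,\sigma_i)$ satisfies $\sigma_i\sqsupseteq T(\sigma_j)$ by badness, with the ``blocked'' clause vacuous because $\sigma_i$ is unblocked. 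The excision also preserves (2): the last term is unchanged, and the non-last terms of the new sequence form a sub-list of the non-last terms of the old, each of which has $T$-image $\ne\langle\rangle$. Since each excision strictly shortens the sequence, the process halts after finitely many steps; when it halts no bad pair remains, which is exactly clause (1). The resulting sequence is a subsequence of the original, it is an escape sequence for $\sigma$, and it is reduced.

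The only mild difficulty is the bookkeeping sketched above — confirming that the two surgeries do not conflict. They do not: truncation is performed once, at the outset, so it cannot spoil a not-yet-established property, and the later excisions preserve (2) since, as noted, an excision retains only a sub-list of terms and leaves the final term untouched. Everything else follows directly from the definitions.
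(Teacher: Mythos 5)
Your proof is correct and takes essentially the same approach as the paper: the paper proceeds by induction on length, showing that a nonreduced escape sequence always admits a strictly shorter escape subsequence by deleting terms between a bad pair or truncating at an early empty-image. Your two-phase version (truncate once, then repeatedly excise) is just a more explicitly bookkept form of the same surgery argument.
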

\begin{proof}

  We proceed by induction on the length of the sequence.  It suffices to show that if $\sigma_0,\ldots,\sigma_n$ is a nonreduced escape sequence then there is a proper subsequence which is also an escape sequence for $\sigma_0$.  If for some $i>j+1$, $\sigma_i\supseteq T(\sigma_j)$ with $\sigma_i$ unblocked, then $\sigma_0,\ldots,\sigma_j,\sigma_i,\ldots,\sigma_n$ is also an escape sequence.  If for some $i<n$, $T(\sigma_i)=\langle\rangle$ then $\sigma_0,\ldots,\sigma_i$ is also an escape sequence.
\end{proof}

\begin{lemma}
  If $\sigma_0,\ldots,\sigma_n$ is an escape sequence for $\sigma_0$ in $T$ then for every $\rho$, $\sigma_0,\sigma_1{}^\frown\rho,\ldots,\sigma_n{}^\frown\rho$ is also an escape sequence.
\label{thin_escape_sequence}
\end{lemma}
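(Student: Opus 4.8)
The plan is to verify the six defining clauses of an escape sequence one at a time for the sequence $\sigma_0,\sigma_1{}^\frown\rho,\ldots,\sigma_n{}^\frown\rho$, using the corresponding clause for $\sigma_0,\ldots,\sigma_n$ together with the behavior of a proper partial transformation on extensions. The single structural fact that does the real work is that if $\tau\in T_+$ then $T$ acts by the append rule $T(\tau^\frown\rho)=T(\tau)^\frown\rho$, whereas if $\tau\in T_-$ then $T(\tau^\frown\rho)=T(\tau)$; and in an escape sequence every $\sigma_i$ is determined, so exactly one of these two cases applies to each $\sigma_i$.

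First I would handle the length and domain bookkeeping: clause $\sigma_0=\sigma_0$ is immediate, and since $|\sigma_i{}^\frown\rho|=|\sigma_i|+|\rho|$ for $1\le i\le n$, the uniform length condition $|\sigma_1{}^\frown\rho|=\cdots=|\sigma_n{}^\frown\rho|$ follows from $|\sigma_1|=\cdots=|\sigma_n|$. All the $\sigma_i{}^\frown\rho$ are still determined, since being determined depends only on an initial segment belonging to $T_-\cup T_+$, and $\sigma_i$ is already an initial segment of $\sigma_i{}^\frown\rho$. Next, the advancement clause $\sigma_{i+1}{}^\frown\rho\sqsupseteq T(\sigma_i{}^\frown\rho)$: for $i\ge 1$ this is where I case on whether $\sigma_i\in T_+$ or $\sigma_i\in T_-$; if $\sigma_i\in T_+$ then $T(\sigma_i{}^\frown\rho)=T(\sigma_i){}^\frown\rho\sqsubseteq\sigma_{i+1}{}^\frown\rho$ using $T(\sigma_i)\sqsubseteq\sigma_{i+1}$ from the original sequence, while if $\sigma_i\in T_-$ then $T(\sigma_i{}^\frown\rho)=T(\sigma_i)\sqsubseteq\sigma_{i+1}\sqsubseteq\sigma_{i+1}{}^\frown\rho$. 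The case $i=0$ is the delicate one and I return to it below. For the blocked-implies-equality clause: $\sigma_{i+1}{}^\frown\rho$ is blocked iff $\sigma_{i+1}$ is (blocking is about whether something maps strictly above it, and appending $\rho$ to both sides preserves and reflects this), and when $\sigma_{i+1}$ is blocked the original sequence gives $\sigma_{i+1}=T(\sigma_i)$, hence $\sigma_{i+1}{}^\frown\rho=T(\sigma_i){}^\frown\rho=T(\sigma_i{}^\frown\rho)$ (here $\sigma_i\in T_+$ necessarily, since $T(\sigma_i)=\sigma_{i+1}$ has the same length as $\sigma_i$). Finally the terminal clause: from $T(\sigma_n)=\langle\rangle$ we get $|T(\sigma_n)|<|\sigma_n|$ so $\sigma_n\in T_-$, hence $T(\sigma_n{}^\frown\rho)=T(\sigma_n)=\langle\rangle$.

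The one step I expect to be the main obstacle is the transition out of $\sigma_0$, i.e.\ checking $\sigma_1{}^\frown\rho\sqsupseteq T(\sigma_0{}^\frown\rho)$ and the blocked clause at $i=0$, because here the first term is $\sigma_0$ itself, \emph{not} $\sigma_0{}^\frown\rho$, so the append rule does not directly relate $T(\sigma_0{}^\frown\rho)$ to $T(\sigma_0)$. The resolution is that the lemma is applied in context only to $\sigma_0$ with $|T(\sigma_0)|<|\sigma_0|$, i.e.\ $\sigma_0\in T_-$, so $T(\sigma_0{}^\frown\rho)=T(\sigma_0)$ regardless of $\rho$; then $\sigma_1{}^\frown\rho\sqsupseteq\sigma_1\sqsupseteq T(\sigma_0)=T(\sigma_0{}^\frown\rho)$, and if $\sigma_1{}^\frown\rho$ is blocked then so is $\sigma_1$, giving $\sigma_1=T(\sigma_0)$ and hence $\sigma_1{}^\frown\rho=T(\sigma_0){}^\frown\rho\sqsupseteq T(\sigma_0{}^\frown\rho)=T(\sigma_0)$ — but blockedness of $\sigma_1{}^\frown\rho$ forces $\rho=\langle\rangle$ in that sub-case, so the equality $\sigma_1{}^\frown\rho=T(\sigma_0{}^\frown\rho)$ holds. (If one wants the lemma for arbitrary $\sigma_0$, one simply notes $\sigma_0$ is determined and handles $\sigma_0\in T_+$ by the append rule as in the generic case; the statement as used causes no trouble.) Assembling the six verified clauses completes the proof.
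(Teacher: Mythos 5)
The paper states this lemma without proof, so there is no ``paper version'' to compare against; I will evaluate your argument on its own.

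Your verification of the length, determinedness, advancement, and terminal clauses for $i\geq 1$ is essentially right, modulo two small points of sloppiness: (a) the phrase ``$\sigma_{i+1}{}^\frown\rho$ is blocked iff $\sigma_{i+1}$ is'' is false in the ``if'' direction (appending $\rho$ can very well unblock a string), but you only use the ``only if'' direction, which is correct since $T(\sigma)\sqsupset\sigma_{i+1}{}^\frown\rho$ immediately gives $T(\sigma)\sqsupset\sigma_{i+1}$; and (b) the inference ``$T(\sigma_i)\sqsubseteq\sigma_{i+1}$ hence $T(\sigma_i){}^\frown\rho\sqsubseteq\sigma_{i+1}{}^\frown\rho$'' is not a general implication --- it holds here only because $|T(\sigma_i)|=|\sigma_i|=|\sigma_{i+1}|$ forces $T(\sigma_i)=\sigma_{i+1}$. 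You later note exactly this length fact, so it is a presentation issue, not a logical one.

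The real problem is the $i=0$ blocked clause, which you correctly single out as the delicate step but then do not actually prove. You write that ``blockedness of $\sigma_1{}^\frown\rho$ forces $\rho=\langle\rangle$ in that sub-case'' and treat this as given. That assertion \emph{is} the whole content of the lemma at $i=0$: one must show that when $\sigma_1=T(\sigma_0)$ (with $\sigma_0$ determined by a $T_-$ element) and $\rho\neq\langle\rangle$, the string $\sigma_1{}^\frown\rho=T(\sigma_0){}^\frown\rho$ cannot be blocked. Nothing in what you wrote establishes this. Blocking of $\sigma_1{}^\frown\rho$ means some $\upsilon\in T_-\cup T_+$ has $T(\upsilon){}^\frown\alpha\sqsupset\sigma_1{}^\frown\rho$, and to rule this out you need to actually use the structure of proper transformations (for instance, the clause that $T(\tau)\not\supseteq T(\sigma)$ when $\sigma\in T_+$, together with the fact that $\sigma_1=T(\sigma_0)$ is itself determined) --- it does not follow formally from what precedes it. Your handling of this step also conflates $T(\sigma_0)$ with $T(\sigma_0{}^\frown\rho)$: since the first term of the new sequence is $\sigma_0$ unchanged, the relevant comparison is always with $T(\sigma_0)$, and the digression about $T(\sigma_0{}^\frown\rho)$ obscures where the argument actually stands. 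Until the unblockedness of $\sigma_1{}^\frown\rho$ for $\rho\neq\langle\rangle$ is argued from the definitions, the proof is incomplete at its crucial point.
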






\begin{lemma}
  Suppose $\sigma_0,\ldots,\sigma_k$ is an open loop in $T$ consisting of determined elements such that for all $\upsilon$, $\sigma_0\sqsubseteq T(\upsilon)$ implies $\sigma_0=T(\upsilon)$ and $\tau_0,\ldots,\tau_n$ is a reduced escape sequence for $\tau_0$. Then one of the following occurs:
  \begin{itemize}
  \item $\tau_0\sqsupseteq\sigma_k$ and for $j>0$, $\tau_j\not\in\cup_{i\leq k}[\sigma_i]$,
  \item There is a unique $j>0$ such that for all $i\leq k$, $\tau_{j+i}\sqsupseteq\sigma_i$,
  \item For all $j$, $\tau_j\not\in\cup_{i\leq k}[\sigma_i]$.
  \end{itemize}
\label{escape_in_loops}
\end{lemma}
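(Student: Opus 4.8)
\emph{Plan and preliminaries.} The plan is to collect a few structural facts about the loop and the ``blocked'' relation, show that the escape sequence can meet the loop only at its bottom $\sigma_0$ and, having done so, must climb the loop, and then read off the three alternatives together with uniqueness. First I would establish: (i) the $\sigma_0,\dots,\sigma_k$ are pairwise distinct, since $\sigma_a=\sigma_b$ with $a<b\le k$ would give, after tracing $T$ forward $k-b$ steps, $\sigma_{k-(b-a)}=\sigma_k$, so that $\sigma_{k-(b-a)+1}=T(\sigma_k)\sqsubset\sigma_0$ would be strictly shorter than $\sigma_0$; hence the $[\sigma_i]$ are pairwise disjoint and each string extends at most one $\sigma_i$. (ii) For $1\le i\le k$ every $\tau\sqsupseteq\sigma_i$ is blocked: writing $\tau=\sigma_i{}^\frown\eta$, the string $\sigma_{i-1}{}^\frown\eta{}^\frown 0$ (legitimate as $\sigma_{i-1}\in T_+$) has $T$-image $\sigma_i{}^\frown\eta{}^\frown 0\sqsupset\tau$. (iii) No $\tau\sqsupseteq\sigma_0$ is blocked, since $T(\sigma)\sqsupset\tau\sqsupseteq\sigma_0$ would force $\sigma_0\sqsubseteq T(\sigma)$, hence $T(\sigma)=\sigma_0$ by hypothesis, contradicting $T(\sigma)\sqsupset\tau$. (iv) If $\tau_j\sqsupseteq\sigma_i$ with $i<k$, then --- as $T_-\cup T_+$ is prefix-free, $\sigma_i\in T_+$, and $\tau_j$ is determined --- $\tau_j=\sigma_i{}^\frown\rho$ for some $\rho$, so $T(\tau_j)=\sigma_{i+1}{}^\frown\rho$ and $\tau_{j+1}\sqsupseteq\sigma_{i+1}$ (and $j+1\le n$, else $T(\tau_n)=\langle\rangle\ne\sigma_{i+1}{}^\frown\rho$). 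Iterating (iv), $\tau_j\sqsupseteq\sigma_0$ gives $\tau_{j+i}\sqsupseteq\sigma_i$ for all $i\le k$.

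\emph{Back-tracing --- the heart of the argument.} I would prove: if $j\ge1$ and $\tau_j\sqsupseteq\sigma_i$ with $1\le i\le k$, then $\tau_{j-1}\sqsupseteq\sigma_{i-1}$. By (ii) $\tau_j$ is blocked, so the escape-sequence condition gives $\tau_j=T(\tau_{j-1})$; write $\tau_{j-1}=\upsilon{}^\frown\rho$ with $\upsilon\in T_-\cup T_+$ its determining initial segment. If $\upsilon\in T_-$ then $T(\upsilon)=\tau_j\sqsupseteq\sigma_i=T(\sigma_{i-1})$, contradicting the last clause of properness ($\sigma_{i-1}\in T_+$, $\upsilon\ne\sigma_{i-1}$); so $\upsilon\in T_+$, and $T(\upsilon){}^\frown\rho=\tau_j\sqsupseteq\sigma_i$ makes $T(\upsilon)$ and $\sigma_i$ comparable. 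If $\upsilon\ne\sigma_{i-1}$, the last clause of properness (used once with $\sigma_{i-1}$ as the distinguished $T_+$-element, once with $\upsilon$) gives $T(\upsilon)\not\sqsupseteq\sigma_i$ and $\sigma_i=T(\sigma_{i-1})\not\sqsupseteq T(\upsilon)$, contradicting comparability. Hence $\upsilon=\sigma_{i-1}$. Iterating, any meeting of the escape sequence with the loop at a positive time traces back to some $\tau_{j'}\sqsupseteq\sigma_0$ with $j'\ge1$; here I use that $\tau_0$ extends no $\sigma_m$ with $m<k$ (automatic when $|T(\tau_0)|<|\tau_0|$, the relevant case, since such $\sigma_m\in T_+$ would force $|T(\tau_0)|=|\tau_0|$), which rules out the back-tracing terminating at $\tau_0$.

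\emph{Reading off the trichotomy.} Put $E=\{j:\tau_j\sqsupseteq\sigma_0\}$. If $E$ meets $\{1,2,\dots\}$, pick $j>0$ in $E$; iterating (iv) gives $\tau_{j+i}\sqsupseteq\sigma_i$ for all $i\le k$ --- the second alternative. For uniqueness, suppose $j_1<j_2$ both lie in $E$ with $j_1\ge1$: from $\tau_{j_1}\sqsupseteq T(\tau_{j_1-1})$ and $\tau_{j_1}\sqsupseteq\sigma_0$ the strings $T(\tau_{j_1-1})$ and $\sigma_0$ are comparable, and $\sigma_0\sqsubset T(\tau_{j_1-1})$ is forbidden by the hypothesis on $\sigma_0$, so $T(\tau_{j_1-1})\sqsubseteq\sigma_0\sqsubseteq\tau_{j_2}$; then the first reducedness condition applied to $\tau_{j_2}\sqsupseteq T(\tau_{j_1-1})$ forces $\tau_{j_2}$ to be blocked (as $j_2>(j_1-1)+1$), contradicting (iii). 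If instead $E\cap\{1,2,\dots\}=\emptyset$, then by the back-tracing paragraph no $\tau_j$ with $j>0$ meets the loop at all; so either $\tau_0$ misses the loop too --- the third alternative --- or $\tau_0\sqsupseteq\sigma_i$ with necessarily $i=k$ --- the first alternative.

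\emph{Expected difficulty.} The identification $\upsilon=\sigma_{i-1}$ is the only place the last clause of properness is genuinely needed, and the uniqueness step --- spotting that a second entry point lies above $T(\tau_{j_1-1})$ and hence is blocked --- is the one non-routine idea; the rest is bookkeeping. I would also record the degenerate case $k=0$ (most steps become vacuous, and $E\cap\{1,2,\dots\}$ still has at most one element by the same reducedness argument) and the case $T(\sigma_k)=\langle\rangle$ (the loop traversal simply ends at $\sigma_k=\tau_n$), neither of which disturbs the argument.
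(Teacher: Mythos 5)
Your proof is correct, and the overall architecture mirrors the paper's: trace the escape sequence through the loop (back-tracing and forward-tracing) and then rule out a second entry using reducedness. The one genuine divergence is in the uniqueness step. The paper works at the \emph{top} of the loop: it reduces to the case $\tau_j\sqsupseteq\sigma_k$, $\tau_{j'}\sqsupseteq\sigma_k$ with $j<j'$, observes $T(\tau_j)=T(\sigma_k)=T(\tau_{j'})$ since $\sigma_k\in T_-$, and then derives a contradiction from reducedness plus a length constraint on $\tau_{j'+1}$. You instead work at the \emph{bottom}: from $\tau_{j_1}\sqsupseteq T(\tau_{j_1-1})$ and $\tau_{j_1}\sqsupseteq\sigma_0$ you show $T(\tau_{j_1-1})\sqsubseteq\sigma_0$ (using the hypothesis on $\sigma_0$), hence $\tau_{j_2}\sqsupseteq T(\tau_{j_1-1})$, and reducedness forces $\tau_{j_2}$ to be blocked, contradicting your observation (iii) that nothing over $\sigma_0$ is blocked. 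Both routes are sound; yours has the slight advantage of making explicit where the hypothesis on $\sigma_0$ is used, and it avoids the length bookkeeping the paper's version needs. Your back-tracing step is also spelled out in more detail than the paper's (which just asserts that $\sigma_i$ being blocked forces $\tau_{j-1}\sqsupseteq\sigma_{i-1}$): your case split on whether the determining prefix $\upsilon$ of $\tau_{j-1}$ lies in $T_-$ or $T_+$, each case killed by the last clause of properness, is exactly the justification the paper leaves implicit. One small point worth flagging explicitly, which you do mention but only parenthetically: the lemma is stated for an arbitrary reduced escape sequence, but both your argument and the paper's tacitly use $|T(\tau_0)|<|\tau_0|$ (the paper opens with ``If $j=0$ then since $|T(\tau_0)|<|\tau_0|$\dots''); this is harmless because the lemma is only ever invoked for escape sequences arising from escapability, but the hypothesis is not literally in the statement.
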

\begin{proof}
  First, suppose some $\tau_j\sqsupseteq\sigma_i$.  If $j=0$ then since $|T(\tau_0)|<|\tau_0|$, we must have $i=k$.  If $j\neq 0$ and $i\neq 0$ then since $\sigma_i$ is blocked, we must have $\tau_{j-1}\sqsupseteq\sigma_{i-1}$.  Since $\tau_0\not\sqsupseteq\sigma_{i-1}$, we can repeat this and conclude that $j>i$ and $\tau_{j-i}\sqsupseteq\sigma_0$.  Furthermore, for each $i'\leq k$, $\tau_{j-i+i'}\sqsupseteq\sigma_{i'}$; we have already shown this for $i'\leq i$, and for $i'>i$ it follows since $T(\tau_j)\sqsupseteq\sigma_{i+1}$, and so on.

So we have shown that if $\tau_j\sqsupseteq\sigma_i$ for some $i,j$ with $j>0$ then we have a complete copy of the loop in our escape sequence.  We now show that if $j<j'$ and $\tau_j\sqsupseteq\sigma_k$ then we cannot have $\tau_{j'}\sqsupseteq\sigma_i$; this shows both the second half of the first case and the uniqueness in the second case.  For suppose we had $\tau_j\sqsupseteq\sigma_k$, $j'>j$, and $\tau_{j'}\sqsupseteq\sigma_i$.  By the previous paragraph, we may assume $i=k$.  But since $\sigma_k$ is determined and $|T(\sigma_k)|<\sigma_k$, we have $T(\tau_j)=T(\sigma_k)=T(\tau_{j'})$.  But then we either have $T(\tau_{j'})=\langle\rangle$ or $T(\tau_j)\sqsubseteq\tau_{j+1}$; in either case the sequence is not reduced, contradicting our assumption.
\end{proof}


\section{Pieces of the Construction}\label{lemmas}
In this section we describe certain modifications of partial transformations and show that they preserve certain essential properties.  First, we lay out the technical properties of the transformations we need to construct.

\begin{definition}
A partial transformation $T$ is \emph{useful} if:
\begin{itemize}
  \item $T$ is proper,
  \item $T$ is partitioned into open loops, and
  \item $T$ is escapable.
\end{itemize}
\end{definition}

It will be helpful to keep track of the following quantity:
\begin{definition}
For any $\sigma$, the \emph{burden} of $\sigma$ in $T$, $bd_T(\sigma)$, is $\sum_{i\leq k}\mu([\sigma])=k2^{-|\sigma|}$ where $k$ is the length of $\mathcal{L}_T(\sigma)$.
\end{definition}

\begin{lemma}[Thinning Loops]
Let $T$ be a useful partial transformation, let $\sigma_0,\ldots,\sigma_k$ be an open loop of determined elements such that if $\tau\in T_-\cup T_+$ then $T(\tau)\not\sqsupset\sigma_0$, and let $\epsilon=2^{-n}$ be smaller than the width of this loop.  Then there is a useful $T'\supseteq T$ such that:
\begin{itemize}
  \item There is a loop $\tau_0,\ldots,\tau_{k'}$ in $T'$ of width $\epsilon$ such that $\cup_{j\leq k'}[\tau_i]=\cup_{i\leq k}[\sigma_i]$,
  \item If $\tau\not\in\cup_{i\leq k}[\sigma_i]$ then $T'(\tau)=T(\tau)$.
\end{itemize}
Furthermore, the burden of $\sigma$ in $T$ is the same as the burden of $\sigma$ in $T'$ for every $\sigma$.
\label{thin_loop}
\end{lemma}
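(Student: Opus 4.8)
The plan is to carry out one step of cutting and stacking. Write $m=|\sigma_0|=\cdots=|\sigma_k|$, so the loop has width $2^{-m}$, and set $d:=n-m\geq 1$. I would cut each level $[\sigma_i]$ of the loop into its $2^{d}$ sub-blocks $[\sigma_i{}^\frown\rho]$, $\rho\in 2^{d}$---this slices the loop into $2^{d}$ parallel sub-loops $\sigma_0{}^\frown\rho,\ldots,\sigma_k{}^\frown\rho$ of width $\epsilon$---and then stack these sub-loops into one loop of width $\epsilon$. Concretely, fix an enumeration $\rho^{(1)},\ldots,\rho^{(2^{d})}$ of $2^{d}$ and define $T'$ by $T'(\sigma_i{}^\frown\rho{}^\frown\upsilon)=\sigma_{i+1}{}^\frown\rho{}^\frown\upsilon$ for $i<k$, by $T'(\sigma_k{}^\frown\rho^{(j)}{}^\frown\upsilon)=\sigma_0{}^\frown\rho^{(j+1)}{}^\frown\upsilon$ for $j<2^{d}$, by $T'(\sigma_k{}^\frown\rho^{(2^{d})}{}^\frown\upsilon)=T(\sigma_k)$, and by $T'(\tau)=T(\tau)$ for every $\tau$ extending no $\sigma_i$; the witnessing sets are $T'_+=(T_+\setminus\{\sigma_0,\ldots,\sigma_{k-1}\})\cup\{\sigma_i{}^\frown\rho: i<k,\ \rho\in 2^{d}\}\cup\{\sigma_k{}^\frown\rho^{(j)}: j<2^{d}\}$ and $T'_-=(T_-\setminus\{\sigma_k\})\cup\{\sigma_k{}^\frown\rho^{(2^{d})}\}$. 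Listing $\tau_{(j-1)(k+1)+i}:=\sigma_i{}^\frown\rho^{(j)}$ then gives an open loop $\tau_0,\ldots,\tau_{k'}$ with $k'=(k+1)2^{d}-1$, of width $\epsilon$ and with $\bigcup_{j}[\tau_j]=\bigcup_{i}[\sigma_i]$, and since $T'$ agrees with $T$ off $\bigcup_i[\sigma_i]$, properness forces the remaining values ($T'(\sigma_i)=\sigma_{i+1}$ for $i<k$ and $T'(\sigma_k)=T(\sigma_k)$), so $T'\supseteq T$.

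I would then check that $T'$ is useful. Properness is mostly routine---prefix-freeness and covering are inherited, the length clauses hold by construction, and $T'$ is measure preserving because each $[\sigma_i]$ is the disjoint union of the $[\sigma_i{}^\frown\rho]$---the one clause needing care being that distinct $T'_+$-blocks have pairwise incomparable images with no image extending a $T'_+$-image. The images of the new stacking arrows are the strings $\sigma_0{}^\frown\rho^{(j+1)}$, all extending $\sigma_0$, and the hypothesis that $T(\tau)\not\sqsupset\sigma_0$ for all $\tau\in T_-\cup T_+$ is precisely what keeps these from colliding with the arrows carried over from $T$ (which stay mutually compatible, $T$ being proper). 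That $T'$ is partitioned into open loops is then straightforward: off $\bigcup_i[\sigma_i]$ the dynamics and loops are untouched, and every block meeting $\bigcup_i[\sigma_i]$ lies in $\tau_0,\ldots,\tau_{k'}$ or one of its thinnings (or, at the coarse resolution, in the persisting loop $\sigma_0,\ldots,\sigma_k$), with uniqueness because blocks of different lengths lie in different loops. The burden claim is a bookkeeping check from $\bigcup_j[\tau_j]=\bigcup_i[\sigma_i]$ and the fact that nothing outside this set is disturbed.

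The substantive part is escapability, and this is where Lemmas \ref{escape_extend}, \ref{thin_escape_sequence} and \ref{escape_in_loops} come in. A determined $\sigma$ with $|T'(\sigma)|<|\sigma|$ is exactly one extending a block of $T'_-$, and since an escape sequence for a block $\kappa\in T'_-$ yields one for any $\kappa{}^\frown\upsilon$ by swapping the first entry, it suffices to produce escape sequences in $T'$ for the blocks of $T'_-$: the old tops $\tau\in T_-\setminus\{\sigma_k\}$, and the new top $\sigma_k{}^\frown\rho^{(2^{d})}$, on which $T'$ returns $T(\sigma_k)$. For each such block I would take a reduced escape sequence $\nu_0,\ldots,\nu_\ell$ in $T$---for $\tau$ itself in the first case, and for $\sigma_k$ in the second (legitimate because $T'(\sigma_k{}^\frown\rho^{(2^{d})})=T(\sigma_k)$)---and apply Lemma \ref{escape_in_loops} to the loop $\sigma_0,\ldots,\sigma_k$ (whose hypotheses are supplied by our assumptions). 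Since $\nu_0$ is incomparable with $\sigma_k$ in the first case and equal to $\sigma_k$ in the second, the conclusion is that the $\nu_i$ with $i>0$ meeting $\bigcup_i[\sigma_i]$ are, if any at all, exactly one full circuit $\nu_j,\ldots,\nu_{j+k}$ of the old loop. I would then pad $\nu_1,\ldots,\nu_\ell$ by a common suffix (Lemma \ref{thin_escape_sequence}) chosen so that all of them have length $\geq n$ and none newly extends a $\sigma_i$; off $\bigcup_i[\sigma_i]$ both the maps and---one checks---the blocked/unblocked status along the padded sequence agree with $T$, so it is already a $T'$-escape sequence except, when it does meet the loop, along the circuit, where $T'$ differs from $T$ only at the top of a sub-column (which moves to the next sub-column instead of dropping). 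The repair is to splice in, after $\nu_{j+k}$, the remaining blocks of the new loop's circuit, which eventually drop to $T(\sigma_k)$ and rejoin the old tail.

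I expect this last verification to be the main obstacle: confirming the spliced sequence is a valid escape sequence for $\kappa$ in $T'$, which reduces to tracking the blocked/unblocked status near $\bigcup_i[\sigma_i]$ and near $T(\sigma_k)$ (in $T'$ certain blocks above $\sigma_0$ and above $T(\sigma_k)$ become blocked that were not before) and to checking that the new circuit ends at exactly the value $T(\sigma_k)$ the original sequence expected at $\nu_{j+k+1}$. Everything else---properness, the open-loop partition, measure preservation and the burden---is bookkeeping once the construction is set up carefully.
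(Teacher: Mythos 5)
Your construction is the same as the paper's (the paper writes the new arrows as $T'(\sigma_k{}^\frown\upsilon{}^\frown\rho)=\sigma_0{}^\frown(\upsilon+1){}^\frown\rho$ rather than via an arbitrary enumeration of $2^{d}$, which is cosmetic), and your escapability argument invokes the same lemmas in the same roles---Lemma~\ref{escape_in_loops} to locate how a reduced escape sequence meets the loop, Lemma~\ref{thin_escape_sequence} to pad to the new resolution, and a splice of the full new circuit in place of the old one. One small correction: your explicit $T'_\pm$ fails prefix-freeness if some $\sigma_i$ is only determined by a \emph{proper} prefix lying in $T_\pm$ (so $\sigma_i\notin T_\pm$ and removing it does nothing, while the added $\sigma_i{}^\frown\rho$ now extends that prefix); the paper sidesteps this by leaving $T'_\pm$ implicit, and the routine fix is to remove the offending prefix and re-enter its sibling branches as well.
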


\begin{figure}
\begin{center}
\begin{tikzpicture}
\node[left] at (0,0.5) {$\cdots$};
\node[right] at (1.5,0.5) {$\cdots$};
\draw (0,0) -- (1.5,0) -- (1.5,2) -- (0,2) -- (0,0);
\draw (0,1) -- (1.5,1);
\node at (0.75,0.5) {$\sigma_0$};
\node at (0.75,1.5) {$\sigma_1$};
\node[above] at (0.75,2) {$\sigma$};
\node[below] at (0.75,0) {Before};

\node[left] at (4,0.5) {$\cdots$};
\node[right] at (5.5,0.5) {$\cdots$};
\draw (4.75,0) -- (4,0) -- (4,2);
\draw[decorate,decoration={snake, amplitude=.3mm,segment length=2mm}] (4,2) -- (4.75,2);
\draw (4.75,2) -- (5.5,2) -- (5.5,0);
\draw[decorate,decoration={snake, amplitude=.3mm,segment length=2mm}] (4.75,0) -- (5.5,0);
\node at (4.75,0.5) {$\sigma_0$};
\node at (4.75,1.5) {$\sigma_1$};
\draw[dashed] (4.75,0) -- (4.75,2);
\node[below] at (4.75,0) {During};

\node[left] at (8,0.5) {$\cdots$};
\node[right] at (8.75,0.5) {$\cdots$};
\draw (8,0) -- (8.75,0) -- (8.75,4) -- (8,4) -- (8,0);
\draw (8,1) -- (8.75,1);
\draw[decorate,decoration={snake, amplitude=.3mm,segment length=2mm}] (8,2) -- (8.75,2);
\draw (8,3) -- (8.75,3);
\node at (8.375,0.5) {$\sigma_00$};
\node at (8.375,1.5) {$\sigma_10$};
\node at (8.375,2.5) {$\sigma_01$};
\node at (8.375,3.5) {$\sigma_11$};
\node[above] at (8.375,4) {$\sigma$};
\node[below] at (8.375,0) {After};
\end{tikzpicture}
\end{center}
\caption{Thinning Loops, Lemma \ref{thin_loop}}
\label{thin_loop_figure}
\end{figure}
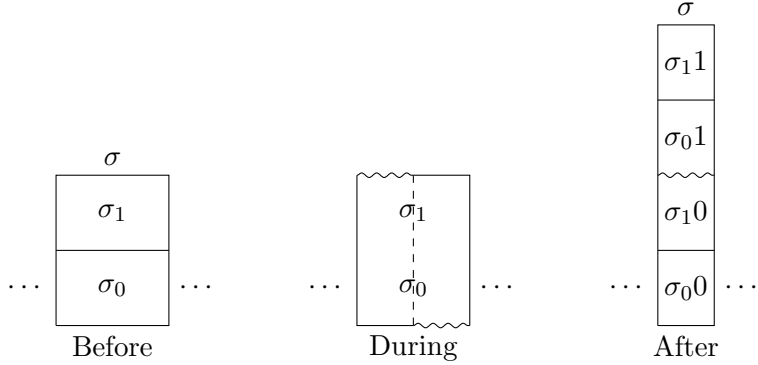

\begin{proof}
Figure \ref{thin_loop_figure} illustrates this lemma.  Formally, let the width of $\sigma_0,\ldots,\sigma_k$ be $2^{-m}$ with $m\leq n$.  Define $T'\supseteq T$ by:
\begin{itemize}
  \item If $\tau=\sigma_k{}^\frown\upsilon{}^\frown\rho$ where $|\upsilon|=n-m$ and $\upsilon$ is not all $1$'s then $T'(\tau)=\sigma_0{}^\frown(\upsilon+1){}^\frown\rho$,
  \item Otherwise $T'(\tau)=T(\tau)$.
\end{itemize}
Note that $T\subseteq T'$ implies that $T'(\sigma_i{}^\frown\rho)=\sigma_{i+1}{}^\frown\rho$ for $i<k$.

Propriety and the fact that $T'$ is partitioned into loops are trivial.  To see escapability, consider some $\tau$ determined such that $|T'(\tau)|<|\tau|$ and fix a reduced escape sequence $\tau_0,\ldots,\tau_r$ in $T$.  If $\tau\in\cup[\sigma_i]$ then by Lemma \ref{escape_in_loops} we have $\tau\sqsupseteq\sigma_k$ and no other element of the escape sequence belongs to $\cup[\sigma_i]$, and therefore $\tau_0,\ldots,\tau_r$ is an escape sequence in $T'$ as well.

If $\tau\not\in\cup[\sigma_i]$ but there is a $j>0$ such that $\tau_{j+i}\sqsupseteq\sigma_i$ for $j\leq k$ then by Lemma \ref{thin_escape_sequence} we may assume that for $j>0$, $|\tau_j|\geq n$.  Then since both $T(\tau_{j-1})\sqsubseteq \tau_j$ and $\sigma_0\sqsubseteq\tau_j$, we must have $T(\tau_{j-1})\sqsubseteq\sigma_0$, and therefore for any $\rho$ of suitable length, the sequence
\[\tau_0,\ldots,\tau_{j-1},\sigma_0{}^\frown\langle 0,\ldots,0\rangle{}^\frown\rho,\ldots,\sigma_k{}^\frown\langle 1,\ldots,1\rangle^\frown\rho,\tau_{j+k+1},\ldots,\tau_n\]
 is an escape sequence for $\tau$ in $T'$.
\end{proof}

We also need a modified version of the above lemma where instead of wanting $\cup_{j\leq k'}[\tau_i]=\cup_{i\leq k}[\sigma_i]$ we want to have a small amount of the original loop left alone.
\begin{lemma}
Let $T$ be a useful partial transformation, let $\sigma_0,\ldots,\sigma_k$ be an open loop of determined elements such that if $\tau\in T_-\cup T_+$ then $T(\tau)\not\supset\sigma_0$, and let $\epsilon=2^{-n}$ be smaller than the width of this loop.  Then there is a useful $T'\supseteq T$ with the following properties:
\begin{itemize}
  \item There is a loop $\tau_0,\ldots,\tau_{k'}$ in $T'$ of width $\epsilon$ such that $\left(\cup_{j\leq k'}[\tau_i]\right)\setminus\left(\cup_{i\leq k}[\sigma_i]\right)=\epsilon\cdot k$,
  \item If $\tau\not\in\cup_{i\leq k'}[\tau_i]$ then $T'(\tau)=T(\tau)$,
  \item If $\tau\not\in\cup_{i\leq k'}[\tau_i]$ and $\tau$ has an escape sequence in $T$ then $\tau$ has an escape sequence in $T'$ which does not contain any element of $\cup_{i\leq k'}[\tau_i]$.
\end{itemize}
Furthermore the burden of $\sigma$ in $T$ is the same as the burden of $\sigma$ in $T'$ for every $\sigma$.
\label{thin_loop_with_leftover}
\end{lemma}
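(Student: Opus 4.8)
The plan is to run the construction from the proof of Lemma~\ref{thin_loop}, but to set one column of the loop aside and never touch it. Write $2^{-m}$ for the width of $\sigma_0,\dots,\sigma_k$, so $m<n$; fix a reserved suffix $\upsilon^*$ of length $n-m$, say $\upsilon^*=\langle 1,\dots,1\rangle$; and cut each $[\sigma_i]$ into the columns $[\sigma_i{}^\frown\upsilon]$ with $|\upsilon|=n-m$. Define $T'\supseteq T$ exactly as in Lemma~\ref{thin_loop} --- climb each column and pass from $\sigma_k{}^\frown\upsilon{}^\frown\rho$ to $\sigma_0{}^\frown(\upsilon+1){}^\frown\rho$ --- but restrict the cycling to the $2^{n-m}-1$ suffixes $\upsilon\ne\upsilon^*$, so that the top of the last of those columns escapes through $T(\sigma_k)\sqsubset\sigma_0$, and put $T'=T$ on the reserved column $\bigcup_{i\le k}[\sigma_i{}^\frown\upsilon^*]$. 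The rebuilt columns form a single open loop $\tau_0,\dots,\tau_{k'}$ of width $\epsilon$; the reserved column survives in $T'$ as an open loop of width $\epsilon$, since $T(\sigma_k{}^\frown\upsilon^*{}^\frown\rho)=T(\sigma_k)\sqsubset\sigma_0\sqsubseteq\sigma_0{}^\frown\upsilon^*$; and $\bigcup_{i\le k}[\sigma_i]$ and $\bigcup_{j\le k'}[\tau_j]$ differ exactly by that reserved column, the leftover of the measure demanded.

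Properness and the partition into open loops follow just as in Lemma~\ref{thin_loop}: replace the blocks $\sigma_0,\dots,\sigma_k$ by their column pieces, send the topmost pieces to $T'_-$, and check the propriety clauses from the hypothesis on $T_-\cup T_+$ and the propriety of $T$. The substance is escapability together with the new third clause. Given $\tau$ determined with $|T'(\tau)|<|\tau|$, fix (via Lemma~\ref{escape_extend}) a reduced escape sequence $\tau_0,\dots,\tau_r$ for $\tau$ in $T$ and apply Lemma~\ref{escape_in_loops} to the loop $\sigma_0,\dots,\sigma_k$, whose hypotheses hold by our assumption on $T_-\cup T_+$: the sequence either (a) meets $\bigcup_i[\sigma_i]$ only through $\tau_0\sqsupseteq\sigma_k$, or (b) contains exactly one full copy $\tau_j\sqsupseteq\sigma_0,\dots,\tau_{j+k}\sqsupseteq\sigma_k$ of the loop and nothing else of $\bigcup_i[\sigma_i]$, or (c) avoids $\bigcup_i[\sigma_i]$. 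In cases (a) and (c) the same sequence is an escape sequence in $T'$ that avoids $\bigcup_j[\tau_j]$, since whatever it met in $\bigcup_i[\sigma_i]$ lay in the reserved column where $T'=T$; this gives ordinary escapability (the case $\tau\in\bigcup_i[\sigma_i]$ being (a), as in Lemma~\ref{thin_loop}) and the third clause whenever (a) or (c) holds. Case (b), the only remaining possibility and the only one available when $\tau\notin\bigcup_i[\sigma_i]$, is handled by rerouting through the reserved column: by Lemma~\ref{thin_escape_sequence} we may assume $|\tau_j|\ge n$, so that $T(\tau_{j-1})\sqsubseteq\sigma_0$, and then for suitable $\rho$,
\[\tau_0,\dots,\tau_{j-1},\ \sigma_0{}^\frown\upsilon^*{}^\frown\rho,\ \dots,\ \sigma_k{}^\frown\upsilon^*{}^\frown\rho,\ \tau_{j+k+1},\dots,\tau_r\]
is an escape sequence for $\tau$ in $T'$ that meets $\bigcup_j[\tau_j]$ nowhere: the spliced middle runs through the reserved-column loop, which has the same exit $T(\sigma_k)\sqsubset\sigma_0$ as the copy of the loop it replaces, and the outer parts meet neither $\bigcup_i[\sigma_i]$ (by (b)) nor $\bigcup_j[\tau_j]$.

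The step requiring the most care is exactly this rerouting, and with it the third clause. One must check that the spliced middle satisfies every clause in the definition of an escape sequence: its elements are determined; a blocked element among them (namely $\sigma_i{}^\frown\upsilon^*{}^\frown\rho$ for $i\ge 1$, blocked by $\sigma_{i-1}{}^\frown\upsilon^*{}^\frown\rho$) is entered exactly, since $T'$ climbs it; the common length is matched by the choice of $|\rho|$; the sequence still ends at $\langle\rangle$; and, just as in the matching step of Lemma~\ref{thin_loop}, $\rho$ can be taken long enough to dodge the finitely many blocks lying below $\sigma_0$ --- here one uses that, by the hypothesis on $T_-\cup T_+$, nothing maps strictly above $\sigma_0$, and since $\upsilon^*$ is never cycled into, nothing in $T'$ maps strictly above $\sigma_0{}^\frown\upsilon^*$, so the reserved column really is a faithful width-$\epsilon$ copy of the original loop. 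Everything else is bookkeeping: $T'$ and $T$ differ only on the rebuilt columns, that is only on $\bigcup_j[\tau_j]$ (the second clause); the leftover set is the reserved column, of the measure named (the first clause); and the burden of every $\sigma$ is preserved by the same bookkeeping as in Lemma~\ref{thin_loop}, since nothing outside $\bigcup_i[\sigma_i]$ is altered.
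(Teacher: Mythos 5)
Your proposal is correct and matches the paper's approach: the paper's proof is a one-line modification of Lemma~\ref{thin_loop} (``proceed exactly as above except that we leave the strip $\sigma_i{}^\frown\langle 1,\ldots,1\rangle$ untouched and leave $T'(\sigma_k{}^\frown\langle 1,\ldots,1,0\rangle)=T(\sigma_k)$''), which is precisely the reserve-one-column-and-cycle-the-rest construction you describe. You supply significantly more of the verification than the paper does, in particular the rerouting of case~(b) escape sequences through the reserved column to establish the third clause, but the underlying construction and the invocations of Lemmas~\ref{escape_extend}, \ref{thin_escape_sequence}, and~\ref{escape_in_loops} are the same ones the paper implicitly relies on.
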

\begin{proof}
  We proceed exactly as above except that we leave the strip $\sigma_i{}^\frown\langle 1,\ldots,1\rangle$ untouched and leave $T'(\sigma_k{}^\frown\langle 1,\ldots,1,0\rangle)=T(\sigma_k)$.
\end{proof}

\begin{lemma}[Lockstep Escape]
Let $T$ be a useful partial transformation and let $\sigma_0,\ldots,\sigma_k$ be a reduced escape sequence such that $|\sigma_0|=|\sigma_1|+1$.  Suppose that $T_-,T_+$ witness that $T$ is proper, and define
\[T'_+=T_+\cup\{\sigma_0\}\cup\{\sigma_i{}^\frown\langle 0\rangle\mid i>0\}.\]
Setting $T'(\sigma_0)=\sigma_1{}^\frown\langle 0\rangle$, and for $i>0$, $T'(\sigma_i{}^\frown\langle 0\rangle)=\sigma_{i+1}{}^\frown\langle 0\rangle$ fully specifies a partial transformation $T'\supseteq T$.  Then $T'$ is useful and if $\sigma\not\in\cup_{i\leq k}[\sigma_i]\cup\cup_{j\leq k'}[\tau_j]$, where $\tau_0,\ldots,\tau_{k'}$ is the open loop containing $\sigma_0$, then the burden of $\sigma$ is unchanged.
\label{escaping}
\end{lemma}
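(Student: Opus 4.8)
The plan is to verify the three conditions in the definition of \emph{useful} for $T'$ — proper, partitioned into open loops, escapable — together with the burden claim, mirroring the proof of Lemma~\ref{thin_loop}. First I would check $T\subseteq T'$ and propriety. Since $\sigma_1\sqsupseteq T(\sigma_0)$ and $|\sigma_0|=|\sigma_1|+1$, we have $T(\sigma_0)\sqsubseteq\sigma_1\sqsubseteq\sigma_1{}^\frown\langle 0\rangle=T'(\sigma_0)$, and in particular $|T(\sigma_0)|<|\sigma_0|$, so $\sigma_0$ has a prefix in $T_-$; thus promoting $\sigma_0$ to $T_+$ is a legitimate refinement of the presentation $(T_-,T_+)$. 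For $0<i<k$ there is a dichotomy forced by the escape sequence: either $\sigma_i$ has a prefix in $T_+$, in which case $|T(\sigma_i)|=|\sigma_i|=|\sigma_{i+1}|$, so $\sigma_{i+1}\sqsupseteq T(\sigma_i)$ forces $\sigma_{i+1}=T(\sigma_i)$ and $T'$ already agrees with $T$ on $[\sigma_i{}^\frown\langle 0\rangle]$; or $\sigma_i$ has a prefix in $T_-$, in which case $T(\sigma_i{}^\frown\langle 0\rangle)=T(\sigma_i)\sqsubseteq\sigma_{i+1}\sqsubseteq\sigma_{i+1}{}^\frown\langle 0\rangle=T'(\sigma_i{}^\frown\langle 0\rangle)$ and again the refinement is legitimate. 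Propriety of $T'$ is then a routine check of the defining clauses; the non-nesting clause $\tau\neq\sigma\Rightarrow T'(\tau)\not\sqsupseteq T'(\sigma)$ uses that each $\sigma_{i+1}$ occurring as a strict extension $\sigma_{i+1}\sqsupset T(\sigma_i)$ is unblocked in $T$ by the escape-sequence clause.

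Next I would identify the loops of $T'$ and prove escapability. Because $|T(\sigma_0)|<|\sigma_0|$, the string $\sigma_0$ is the unique element $\tau$ of its loop with $|T(\tau)|<|\tau|$, hence $\sigma_0=\tau_{k'}$ is the top of $\mathcal{L}_T(\sigma_0)=(\tau_0,\ldots,\tau_{k'})$. Applying Lemma~\ref{escape_in_loops} to this loop and the reduced escape sequence $\sigma_0,\ldots,\sigma_k$, and using $|\sigma_j|<|\sigma_0|$ for $j\ge 1$ to exclude the alternative in which the escape sequence contains a whole copy of the loop, we get that the escape sequence meets $\cup_{j\leq k'}[\tau_j]$ only at $\sigma_0$. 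The loop of $T'$ that replaces $\mathcal{L}_T(\sigma_0)$ is then $(\tau_0,\ldots,\tau_{k'},\sigma_1{}^\frown\langle 0\rangle,\ldots,\sigma_k{}^\frown\langle 0\rangle)$: all these strings have length $|\sigma_0|$, consecutive ones are joined by $T'$ (by the dichotomy above), and $T'(\sigma_k{}^\frown\langle 0\rangle)=T(\sigma_k)=\langle\rangle\sqsubset\tau_0$. That $T'$ is genuinely partitioned into open loops — no second pre-image and no closed loop — is exactly where the reduced-escape conditions are used: whenever $\sigma_{i+1}\neq T(\sigma_i)$ (which is forced when $\sigma_i$ has a prefix in $T_-$, since then $|T(\sigma_i)|<|\sigma_{i+1}|$), $\sigma_{i+1}$ is unblocked in $T$, so no $T$-image extends $\sigma_{i+1}$ and in particular $\sigma_{i+1}{}^\frown\langle 0\rangle$ has no $T$-predecessor that could clash with the new edge into it; the remaining stretches of the route are pieces of old loops that are merely relabelled, and any loop disjoint from $[\sigma_0]\cup\cup_{0<i<k}[\sigma_i{}^\frown\langle 0\rangle]$ is unchanged. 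For escapability: if $\tau$ is one of the new top elements $\sigma_i{}^\frown\langle 1\rangle$ split off by the refinement, then $T'(\sigma_i{}^\frown\langle 1\rangle)=T(\sigma_i)\sqsubseteq\sigma_{i+1}$ and $\sigma_i{}^\frown\langle 1\rangle,\sigma_{i+1},\sigma_{i+2},\ldots,\sigma_k$ is an escape sequence for $\tau$ in $T'$; any other determined $\tau$ with $|T'(\tau)|<|\tau|$ has a reduced escape sequence in $T$ which, after rerouting around the new loop using Lemmas~\ref{thin_escape_sequence} and~\ref{escape_extend} exactly as in the last paragraph of the proof of Lemma~\ref{thin_loop}, becomes an escape sequence in $T'$.

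Finally, the burden claim is immediate: if $\sigma\notin\cup_{i\leq k}[\sigma_i]\cup\cup_{j\leq k'}[\tau_j]$, then $\mathcal{L}_T(\sigma)$ — a set of strings of length $|\sigma|$ — contains none of $\sigma_0,\sigma_1{}^\frown\langle 0\rangle,\ldots,\sigma_{k-1}{}^\frown\langle 0\rangle$, the strings at which $T$ and $T'$ differ, and acquires no new element, so $\mathcal{L}_{T'}(\sigma)=\mathcal{L}_T(\sigma)$ and the burden is unchanged. I expect the main obstacle to be the simultaneous verification that $T'$ is partitioned into open loops and escapable — in particular, the bookkeeping showing that the escape route splices onto $\mathcal{L}_T(\sigma_0)$ without creating a closed loop or a double pre-image. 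This is precisely the role of the reduced-escape-sequence hypothesis and of Lemma~\ref{escape_in_loops}, and it is the only genuinely non-routine part of the argument.
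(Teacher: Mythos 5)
Your proposal follows essentially the same route as the paper's proof: propriety is checked via the dichotomy between blocked and unblocked elements of the reduced escape sequence (using that a reduced sequence has distinct $\sigma_i$, and that $\sigma_{i+1}$ unblocked means no competing pre-image of $\sigma_{i+1}{}^\frown\langle 0\rangle$ exists); the partition into loops and the burden claim are handled by identifying exactly which blocks $T'$ moves; and escapability is established by rerouting an escape sequence from $T$ around the spliced-in piece. Your explicit identification of the new loop $(\tau_0,\ldots,\tau_{k'},\sigma_1{}^\frown\langle 0\rangle,\ldots,\sigma_k{}^\frown\langle 0\rangle)$ via Lemma~\ref{escape_in_loops} is a welcome elaboration of a step the paper dismisses as ``easy to see.''

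One place where your argument is slightly off as written: the proposed escape sequence $\sigma_i{}^\frown\langle 1\rangle,\sigma_{i+1},\ldots,\sigma_k$ need not be an escape sequence \emph{in $T'$}, because the escape-sequence definition requires every term to be determined, and those $\sigma_j$ (for $j>i$) that were themselves in $T_-$ are split into $\sigma_j{}^\frown\langle 0\rangle$ and $\sigma_j{}^\frown\langle 1\rangle$ in the new presentation, so $\sigma_j$ itself may no longer have a prefix in $T'_-\cup T'_+$. The repair (used in the paper) is to append $\langle 1\rangle$ throughout, obtaining $\sigma_i{}^\frown\langle 1\rangle,\sigma_{i+1}{}^\frown\langle 1\rangle,\ldots,\sigma_k{}^\frown\langle 1\rangle$, whose terms land in $T'_-$. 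More generally, the paper's escapability proof for this lemma does not proceed ``exactly as in the last paragraph of Lemma~\ref{thin_loop}'' (which relies on Lemma~\ref{escape_in_loops} and a thinned loop); it instead performs a direct case analysis on how an escape sequence from $T$ meets $[\sigma_0]$ or the $[\sigma_j]$ ($j>0$), splicing the new loop in the first case and appending $\langle 1\rangle$'s in the second. Your deferral to Lemma~\ref{thin_loop} is a minor overreach, but the underlying idea — reroute around the modified region — is the same, and with the $\langle 1\rangle$-appending fix your outline aligns with the paper's argument.
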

\begin{proof}
  To see that $T'$ is proper, we note that we have specified $T'_+$ and (implicitly) $T'_-$, and we need only check that if $\sigma\in T'_+$ and $\sigma\neq \tau\in T'_+\cup T'_-$ then $T'(\tau)\not\supseteq T'(\sigma)$.  Clearly we need only check this for $T(\sigma)=\sigma_i{}^\frown\langle 0\rangle$.  Since the escape sequence was reduced, we cannot have $\sigma_i=\sigma_j$ for $i\neq j$, so we can restrict our attention to the $\tau$ such that $T'(\tau)=T(\tau)$.  If $\sigma_i$ was not blocked in $T$ then there is no such $\tau$, and if $\sigma_i$ was blocked in $T$ then already $T(\sigma_{i-1}{}^\frown\langle 0\rangle)=\sigma_i{}^\frown\langle 0\rangle$, and the claim follows since $T$ was proper.

It is easy to see that $T'$ remains partitioned into open loops.

Finally we check that $T'$ is escapable.  Let $\tau$ determined be given with $|T'(\tau)|<|\tau|$.  Then the same was true in $T$, so $\tau$ had an escape sequence $\tau_0,\ldots,\tau_r$ in $T$.  We may assume $|\tau_1|\geq|\sigma_1|$.  There are a few potential obstacles we need to deal with.  First, it could be that for some $i$, $\tau_i\sqsupseteq\sigma_0$.  Letting $\tau_i=\sigma_0{}^\frown\rho$, we must have $\tau_0,\ldots,\sigma_0{}^\frown\rho,\sigma_1{}^\frown\langle 0\rangle^\frown\rho,\ldots,\sigma_k{}^\frown\langle 0\rangle^\frown\rho$ is also an escape sequence for $\tau_0$.

Otherwise, there could be some $i$ and $j>0$ such that $\tau_i\sqsupseteq\sigma_j$.  If $\tau_i=\sigma_j$ then $\tau_0,\tau_1{}^\frown\langle 1\rangle,\ldots,\tau_r{}^\frown\langle 1\rangle$ is also an escape sequence in $T$ and remains one in $T'$.  If $\tau_i=\sigma_j{}^\frown\langle 1\rangle^\frown\rho$, the same escape sequence works in $T'$.  Otherwise, replacing each such $\tau_i$ with $\sigma_j{}^\frown\langle 1\rangle^\frown\rho$ gives a new escape sequence in $T$ which remains one in $T'$.
\end{proof}



The main building block of our construction will combine these two steps as illustrated in Figure \ref{combine_figure}.  The next section is devoted to explaining this process.

\begin{figure}
\begin{center}
\begin{tikzpicture}[scale=0.75]
    \tikzstyle{every node}=[font=\small]
\node[left] at (0,0.25) {$\cdots$};
\node[right] at (3,0.25) {$\cdots$};
\draw (0,0) -- (1,0) -- (1,1.5) -- (0,1.5) -- (0,0);
\draw (0,.5) -- (1,.5);
\draw (0,1) -- (1,1);
\draw (1,0) -- (2,0) -- (2,.5) -- (1,.5) -- (1,0);
\draw (2,0) -- (3,0) -- (3,1) -- (2,1) -- (2,0);
\draw (2,.5) -- (3,.5);
\node[above] at (0.5,1.5) {$\langle\rangle$};
\node[below] at (1.5,0) {Before};

\node[left] at (5,0.25) {$\cdots$};
\node[right] at (8,0.25) {$\cdots$};

\draw (5.5,0) -- (5,0) -- (5,1.5) -- (5.5,1.5);
\draw[fill=gray!50] (5.5,0)
decorate [decoration={zigzag}] {-- (5.75,0)}
[dashed] -- (5.75,1.5)
[-] -- (5.5,1.5)
[dashed] -- (5.5,0);
\draw  (5.5,1.5)-- (5.75,1.5);
\draw (5.75,0) -- (6,0) -- (6,1.5) --(5.75,1.5);
\draw (5,.5) -- (6,.5);
\draw (5,1) -- (6,1);
\draw (6.25,0) -- (6,0) -- (6,.5) -- (6.25,.5);
\draw[pattern=north west lines] (6.25,0) 
decorate [decoration=zigzag] {-- (6.5,0)}
-- (6.5,.5)
decorate [decoration={zigzag,mirror}] {-- (6.25,.5)}
-- (6.25,0);
\draw (6.5,0) -- (7,0) -- (7,.5) -- (6.5,.5);
\draw (7,0) -- (7.25,0)
decorate [decoration={zigzag}] {-- (7.5,0)}
decorate [decoration={zigzag}] {-- (7.75,0)}
decorate [decoration={zigzag}] {-- (8,0)}
-- (8,1)
decorate [decoration={zigzag,mirror}] {-- (7.75,1)}
decorate [decoration={zigzag,mirror}] {-- (7.5,1)}
decorate [decoration={zigzag,mirror}] {-- (7.25,1)}
decorate [decoration={zigzag,mirror}] {-- (7,1)}
--(7,0);
 \draw[dashed] (7.25,0) -- (7.25,1);
 \draw[dashed] (7.5,0) -- (7.5,1);
 \draw[dashed] (7.75,0) -- (7.75,1);
\draw (7,0.5)--(8,0.5);
\node[below] at (6.5,0) {During};

\node[left] at (11,0.25) {$\cdots$};
\node[right] at (13.25,0.5) {$\cdots$};
\draw (11,0) -- (11.5,0) -- (11.5,1.5) -- (11,1.5) -- (11,0);
\draw (11.75,0) -- (12,0) -- (12,1.5) -- (11.75,1.5) -- (11.75,0);
\draw (11,.5) -- (11.5,.5);
\draw (11,1) -- (11.5,1);
\draw (11.75,.5) -- (12,.5);
\draw (11.75,1) -- (12,1);
\draw (12,0) -- (12.25,0) -- (12.25,.5) -- (12,.5) -- (12,0);
\draw (12.5,0) -- (13,0) -- (13,.5) -- (12.5,.5) -- (12.5,0);
\draw (13,0) -- (13.25,0) -- (13.25,1) 
decorate [decoration={zigzag,mirror}] {-- (13,1)}
-- (13,0);
\draw (13,.5) -- (13.25,.5);
\node at (13.125,1.5) {$\vdots$};
\draw (13,1.7)
decorate [decoration={zigzag}] {-- (13.25,1.7)}
-- (13.25,2.7);
\draw (13,2.7)--(13,1.7);
\draw (13,2.2) -- (13.25,2.2);
\draw[draw=white,pattern=north west lines] (13,2.7)
decorate [decoration={zigzag}] { -- (13.25,2.7)}
 -- (13.25,3.2) 
decorate [decoration={zigzag,mirror}] {-- (13,3.2)}
 -- (13,2.7);
\draw (13,3.2) -- (13,2.7)
decorate [decoration={zigzag}] {-- (13.25,2.7)}
-- (13.25,3.2) ;
\draw[fill=gray!50] (13,3.2) 
decorate [decoration={zigzag}] {-- (13.25,3.2)}
-- (13.25,4.7)-- (13,4.7) -- (13,3.2);
 \draw (13,3.7) -- (13.25,3.7);
 \draw (13,4.2) -- (13.25,4.2);
\node[below] at (12.5,0) {After};

\end{tikzpicture}
\end{center}
\caption{}
\label{combine_figure}
\end{figure}


\section{The Main Construction}\label{main}
Our main tool for causing the Birkhoff ergodic theorem to fail at a point is the notion of an upcrossing.
\begin{definition}
Given a measurable, measure-preserving, invertible $T:2^\omega\rightarrow 2^\omega$, a point $x\in 2^\omega$, a measurable $f$, and rationals $\alpha<\beta$, an \emph{upcrossing sequence} for $\alpha,\beta$ is a sequence
\[0\leq u_1<v_1<u_2<v_2<\cdots<u_N<v_N\]
such that for all $i\leq N$,
\[\frac{1}{u_i+1}\sum_{j=0}^{u_i}f(T^jx)<\alpha,\ \frac{1}{v_i+1}\sum_{j=0}^{v_i}f(T^jx)>\beta.\]

$\tau(x,f,\alpha,\beta)$ is the supremum of the lengths of upcrossing sequences for $\alpha,\beta$.
\end{definition}
By definition, Birkhoff's ergodic theorem fails at $x$ exactly if $\tau(x,f,\alpha,\beta)=\infty$ for some $\alpha<\beta$.  Our plan is to look at an \ml test $\langle V_j\rangle$ and, as sequences $\sigma$ are enumerated into an appropriate $V_j$, ensure that the lower bound on $\tau(x,f,1/3,1/2)$ increases for each $x\in[\sigma]$.

\begin{theorem}\label{main_thm}
Suppose $x\in 2^\omega$ is not \ml random.  Then there is a computable set $A$ and a computable transformation $T:2^\omega\rightarrow 2^\omega$ such that $x$ is not typical with respect to the ergodic theorem.
\end{theorem}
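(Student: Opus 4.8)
The plan is a cutting-and-stacking construction driven by a \ml test covering $x$. Since $x$ is not \ml random, fix such a test $\langle V_j\rangle$; enumerating $\sigma$ into $V_j'$ only when no prefix of $\sigma$ is yet in $V_j'$ yields a c.e.\ test with each $V_j'$ prefix-free, still satisfying $\lambda([V_j'])\le 2^{-j}$ and $x\in\bigcap_j[V_j']$, so we may assume each $V_j$ is prefix-free. Put $A=[\langle 1\rangle]$, which is clopen (hence computable) with $0<\lambda(A)<1$, and let $f=\chi_A$. Since the Birkhoff averages of $\chi_A$ along $x$ fail to converge exactly when $\tau(x,\chi_A,1/3,1/2)=\infty$, it suffices to build a computable $T\colon\strings\to\strings$ whose induced transformation $\widehat T$ (as in Section \ref{notation}) is measure preserving, has $x$ in its domain, and satisfies $\tau(x,\chi_A,1/3,1/2)=\infty$.

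The engine is the building block of Figure \ref{combine_figure}, which I would isolate as a lemma: given a useful partial transformation $T$, a cylinder $[\sigma]$, and a bound $m\nat$, there is a useful $T'\supseteq T$, agreeing with $T$ outside a controlled region associated with $[\sigma]$ and with the same burden as $T$ at every $\tau$ with $[\tau]\not\subseteq[\sigma]$, such that the orbit of every $y\in[\sigma]$ under any useful extension of $T'$ is forced to perform, beyond step $m$ and strictly later than every upcrossing already forced by $T$, one more upcrossing of $[1/3,1/2]$ for $\chi_A$. One builds $T'$ as in Figure \ref{combine_figure}: first apply Thinning Loops (Lemmas \ref{thin_loop} and \ref{thin_loop_with_leftover}) to the loops meeting $[\sigma]$, recutting them into very thin, hence very tall, loops, and then apply Lockstep Escape (Lemma \ref{escaping}) to a reduced escape sequence --- one exists by escapability together with Lemma \ref{escape_extend}, and its behaviour in loops is controlled by Lemmas \ref{thin_escape_sequence} and \ref{escape_in_loops} --- at the top of each such loop, routing that top first through a long run of fresh levels inside $A^c$ and then through a long run of fresh levels inside $A$. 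Making both runs long relative to $m$ and to the finitely many orbit steps already fixed drives the running average $\frac1n\sum_{i<n}\chi_A(\widehat T^i y)$ below $1/3$ and then above $1/2$; escapability is exactly what allows this routing while keeping $T'$ useful, and the burden-preservation clauses keep the measures controlled.

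With the lemma in hand the construction runs in stages, producing useful $T_0\subseteq T_1\subseteq\cdots$ with $T=\bigcup_s T_s$, starting from a trivial useful $T_0$. At stage $s$ we take the next hitherto-unprocessed $\sigma$ in a fixed effective enumeration of $\bigcup_j V_j$ and apply the building-block lemma to $[\sigma]$, with $m$ the number of orbit steps already fixed; each $T_s$ is again useful by the lemmas cited. Because each $V_j$ is prefix-free with $\lambda([V_j])\le 2^{-j}$, a point $y$ has at most one processed prefix in each $V_j$ and prefixes processed for at most finitely many $j$ --- unless $y\in\limsup_j[V_j]$, a computable $\mathcal G_\delta$ null set by Borel--Cantelli. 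Standard bookkeeping --- using this, the burden-preservation clauses, and the fact that each oscillation tower is finite, so that successive applications only extend, never revise, the determined part of an orbit --- then shows that $\lim_n T(z\upharpoonright n)$ is defined and infinite for every $z$ off that null set, yielding a measure-preserving $\widehat T$ there, and that $x$ itself, whose infinitely many ever-longer processed prefixes equip it with an orbit threading through successive finite oscillation towers, lies in the domain of $\widehat T$.

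Finally, since $x\in\bigcap_j[V_j]$ and, as $\lambda([V_j])\le 2^{-j}$, the unique prefix of $x$ in the prefix-free set $V_j$ has length at least $j$, infinitely many distinct prefixes of $x$ get processed; each forces one further, strictly later upcrossing of $[1/3,1/2]$ for the $\widehat T$-orbit of $x$, so $\tau(x,\chi_A,1/3,1/2)=\infty$ and the Birkhoff averages of $\chi_A$ along $x$ diverge --- $x$ is not typical with respect to the ergodic theorem, witnessed by the computable set $A$ and the computable transformation $\widehat T$. I expect the main obstacle to be the building-block lemma together with the simultaneous maintenance of usefulness and of the null-set bound: one must intercept the orbits of all of $[\sigma]$ and splice arbitrarily tall oscillation towers into them via escape sequences without creating closed loops, without disturbing the upcrossings already forced, and without letting the set of never-finalized orbits gain positive measure or engulf $x$ --- and it is exactly here that the ``useful'' package (escapability above all) and the bound $2^{-j}$ on the test are used.
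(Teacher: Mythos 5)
Your sketch correctly identifies the overall architecture -- driving divergence of the Birkhoff averages at $x$ via a cutting-and-stacking construction triggered by a \ml test, using Lemmas \ref{thin_loop}, \ref{thin_loop_with_leftover}, and \ref{escaping} to recut loops thinly and splice escape sequences through regions inside and outside a designated set $A$ -- and this is indeed the paper's plan. However, the proposal has a genuine gap precisely where the paper does its hardest work: the measure bookkeeping that makes the stage-by-stage splicing sustainable.

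First, you process \emph{every} $\sigma$ in a fixed enumeration of $\bigcup_j V_j$, forcing one new upcrossing for each. The paper explicitly avoids this and warns (in a footnote) that giving every element of $V_j$ a $j$th upcrossing is too strong -- it would contradict V'yugin's theorem via Demuth randomness. Instead the paper maintains a function $\rho^n$ assigning to each determined $\sigma$ the index $j$ of the test set $V_j$ that it is ``watching,'' and it acts only when the enumerated $\tau$ satisfies $\rho^n(\tau)=j$. Crucially, when passing to the next layer of upcrossings, $\rho^{n+1}$ is set to a \emph{new, adaptively chosen} index $j$ large enough that $\mu(A^n_{k+1})-a^n_{k+1}>\gamma 2^{-j}$ and similarly for $B$. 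Without this adaptive throttling, the cumulative cost of the escape sequences you splice in (which become permanent parts of the resulting loops and cannot be reclaimed) is unbounded, and there is no reason the total measure routed through $A$ and its complement stays within what is available.

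Second, you take $A=[\langle 1\rangle]$ fixed and route orbits through ``fresh levels inside $A^c$'' and ``fresh levels inside $A$,'' but you give no mechanism guaranteeing freshness or preventing interference between the routings of different $\sigma$'s. The paper's reserved stratification $A=\bigcup_k A^0_k$, $B=\bigcup_k B^0_k$, with $A^n_k, B^n_k$ the still-available parts and $a^n_k, b^n_k$ tracking committed-but-unused measure, is precisely the machinery that makes ``fresh'' precise and keeps successive routings disjoint: the $(k+1)$st upcrossing for an element draws only on $A^n_k$ and $B^n_k$, and after processing, the used pieces are migrated into $W^{n+1}_{k+1}$. Invoking ``burden-preservation clauses keep the measures controlled'' is not enough -- burden preservation is stated only \emph{outside} the modified region, and inside it the committed measure grows by exactly the escape-sequence overhead the paper's $a^n_k, b^n_k$ bookkeeping is designed to absorb.

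Finally, your Borel--Cantelli argument addresses the wrong null set for the domain of $\widehat T$: the concern is not whether $y$ has prefixes in infinitely many $V_j$, but whether $\lim_n T(y\upharpoonright n)$ is eventually defined and infinite, which depends on the top of $y$'s loop being repeatedly extended. The paper handles this with separate ``even stages'' that split every $T_-$ sequence and push the lower half into the loop, ensuring totality off the set of sequences ending in all $0$'s; this extension step is absent from your construction and Borel--Cantelli does not supply it.
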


\begin{proof}
Let $\langle V_j\rangle$ be a \ml test witnessing that $x$ is not \ml random, so $x\in\cap_j V_j$.  We will construct an increasing sequence of useful partial transformations $T_0\subseteq T_1\subseteq T_2\subseteq\cdots$ so that $T=\cup_n T_n$ will be the desired transformation.  We will maintain, at each stage, a computable partition of $2^\omega$ into clopen components $W^n$, and for each value $k$, we will maintain components $A^n_k$ and $B^n_k$ with the requirement that it is always possible to extend $T_n$ such that it maps each component to itself.  (Since the partition components are all clopen sets, we will also treat these sets as a partition of $2^{\geq m}$ for $m$ sufficiently large.)

$W^n$ represents the ``work area''; initially $W^0$ will be a portion of $2^\omega$ known to contain $x$.  In later stages, $W^n$ will grow to include parts of the $A^n_k$ and $B^n_k$ which have been used.  $A=\cup_k A^0_k$ will be the set which will demonstrate the failure of the ergodic theorem for $x$.  Our strategy will then be that when we discover elements in $V_n$ for appropriate $n$, we will arrange for the transformation to eventually map those elements through $A$ for a long time, ensuring that the average membership in $A$ reaches $1/2$.  We will then have the transformation map those elements through $B=\cup_k B^0_k$ for a long time to bring the average down to $1/3$.  We will do this to each element of $\cap_j V_j$ infinitely many times, ensuring that elements in this intersection are not typical.\footnote{It is not possible to ensure that every element of the set $V_j$ receives $j$ upcrossings, since this would imply that the theorem holds for every $x$ which failed to be even Demuth random, which would contradict V'yugin's theorem.}  $A^0_k$ is the section of $A$ reserved for making the average large for the $(k+1)^{st}$ time, and $B^0_k$ is the section reserved for making the average small again after.  $A^n_k$ and $B^n_k$ represent the portions still available at stage $n$ after some parts have been used. 
  We also keep track of constants $a^n_k<\mu(A^n_k)$ and $b^n_k<\mu(B^n_k)$, which represent how much of $A^n_k$ and $B^n_k$ have already been committed but not yet used.

Finally we have a partition $W^n=\cup_k W^n_k$, where elements of $W^n_k$ are those which are already guaranteed in stage $n$ to have $k$ upcrossings, and a function $\rho^n:T_+\cup T_-\rightarrow\mathbb{N}$, where $V_{\rho^n(\sigma)}$ is the element of our test set we will be watching to discover which elements of $[\sigma]$ require a new upcrossing.

Initially, we assume without loss of generality that $x$ belongs to some clopen set $W^0$ with $\mu(W^0)<1$. For instance, we can suppose we know the first bit of $x$ and let $W^0$ equal $[0]$ or $[1]$ as appropriate. Then, from the remaining measure, we take $A^0_k$ and $B^0_k$ so that $\mu(B^0_k)=2\mu(A^0_k)$ for all $k$.  We set $W^0_0=W^0$ and take $T_0$ to be the trivial transformation of height $0$ (i.e., $T_0(\sigma)=\langle\rangle$ for all $\sigma$).  Choose $j$ large enough that $\mu(V_j)< \mu(A^0_0)$ and set $\rho^0(\langle\rangle)=j$.  Set $a^0_0=\mu(V_j)$, $b^0_0=2\mu(V_j)$, and for $k>0$, $a^0_k=b^0_k=0$.

We require that all determined members of $A^n_k$ and $B^n_k$ are unblocked.  Finally, we will maintain a stronger form of escapability: we require that if $\sigma$ is determined and $|T_n(\sigma)|<|\sigma|$, then $\sigma$ belongs to either $W^n$ or $A^n_k$ or $B^n_k$ for some $k$, and we require that $\sigma$ has an escape sequence contained entirely in the same component.

Now we proceed by stages. At each even stage $n$, we take steps to ensure that $T$ is defined almost everywhere. Given $T_n$, defined by $T_+,T_-$, we define $T'_+=T_+$ and define $T'_-$ by
\[T'_-=\cup_{\sigma\in T_-}\{\sigma^\frown\langle 0\rangle,\sigma^\frown\langle 1\rangle\}.\]
We then set $T_{n+1}(\sigma^\frown\langle 0\rangle)=T_n(\sigma)$.  Let $\tau$ be the initial element of the open loop containing $\sigma$ in $T$, so $T(\sigma)\sqsubset\tau$.  In particular, there is a $b\in\{0,1\}$ such that $T(\sigma)^\frown\langle b\rangle\sqsubseteq\tau$, and we set $T_{n+1}(\sigma^\frown\langle 1\rangle)=T(\sigma)^\frown\langle b\rangle$.  Note that this process ensures that as long as $x\in 2^{\omega}$ does not end in cofinitely many $0$'s, $T(x)$ will be defined.

We now consider the real work. At each odd stage $n$, we take steps to ensure that the ergodic theorem does not hold for any element of $\cap_j V_j$. We may assume that exactly one $\tau$ is enumerated into $\cup_j V_j$ at this stage, and that if $\tau$ is enumerated into $V_j$ at this stage then for each $i<j$, there is a $\tau'\sqsubseteq\tau$ which was enumerated into $V_i$ at some previous stage.  We first assume $\tau$ is determined.  If $\rho^n(\tau)\neq j$ where $\tau$ was enumerated into $V_j$, we do nothing, so we will assume that $\rho^n(\tau)=j$.  We have $\tau\in W^n_{k-1}$, and we have ensured inductively that $bd_{T_n}(\tau)\leq a^n_k<\mu(A^n_k)$ and $2bd_{T_n}(\tau)\leq b^n_k<\mu(B^n_k)$.

In Figure \ref{one_step_figure}, we illustrate the way we intend to arrange $T'$.  We must ensure that every point in $[\tau]$, a section of fixed total measure, receives a new upcrossing.  We must do so while ensuring that the total measure of the portions of $A^n_k$ used is strictly less than $bd_{T_n}(\tau)+(\mu(A^n_k)-a^n_k)$ and the total measure of the portions of $B^n_k$ used is strictly less than $2bd_{T_n}(\tau)+(\mu(B^n_k)-b^n_k)$.  Finally, the escape sequences will all have a fixed height, which we cannot expect to bound in advance.  Our solution will be to thin all the parts other than the escape sequences until the entire tower is so narrow that we can afford the error introduced by the various escape sequences.

So let $\tau_1,\ldots,\tau_t$ be the open loop containing $\tau$ and let $e_t$ be the height of an escape sequence for $\tau_t$.  For some finite $U$ and each $i\leq U$, let $\upsilon_i\in A^n_k$ be distinct, incomparable, determined sequences so that $\mu(A^n_k)>\sum_{i\leq U}\mu([\upsilon_i])=\sum_{i\leq t}\mu([\tau_i])>bd_{T_n}(\tau)$.  (Such sequences exist because we have ensured that $\mu(A^n_k)>a^n_k\geq bd_{T_n}(\tau)$.)  For each $\upsilon_i$ there is an escape sequence contained entirely in $A^n_k$; let $e_u$ be the maximum of the heights of these sequences.  Now choose a finite $V$ and sequences $\nu_i$, $i\leq V$, so that $\mu(B^n_k)>\sum_{i\leq V}\mu([\nu_{i}])>4bd_{T_n}(\tau)$.  (Again, such sequences exist because $\mu(B^n_k)>b^n_k\geq 4bd_{T_n}(\tau)$.)  For each $\nu_{i}$ there is an escape sequence contained entirely in $B^n_k$; let $e_v$ be the maximum of the heights of these sequences.

For each $\upsilon_i$ we fix an escape sequence entirely in $A^n_k$, and for each $\nu_i$ we fix an escape sequence entirely in $B^n_k$.  We now choose $N$ and $N'$ sufficiently large to carry out the following argument.  We apply Lemma \ref{thin_loop} to $\tau_1,\ldots,\tau_t$ with $\epsilon=2^{-(N+N')}$, and to each $\upsilon_i$ and $\nu_i$ we first apply Lemma \ref{thin_loop_with_leftover} with $\epsilon=2^{-N}$, and then, letting $\upsilon'_i$ and $\nu'_i$ be the portions which are thinned, we then apply Lemma \ref{thin_loop} to $\upsilon'_i$ and $\nu'_i$ with $\epsilon=2^{-N'}$.

We now take escape sequences of width $2^{-(N+N'-1)}$ for the $\upsilon'_i$ and $\nu'_i$.  By choosing $N$ large enough, we may ensure that we may choose these sequences to be nonoverlapping and that the leftover portions from the application of Lemma \ref{thin_loop_with_leftover} are not completely filled by these portions.  We take an escape sequence for $\tau_t$ of the same width.  We now apply Lemma \ref{escaping} repeatedly.

Let $T'$ be the partial transformation we obtain after all these applications.  $\cup_{i\leq t}[\tau_i]$ is contained in some open loop with final element $\tau^*$ and $T'(\tau^*)=\langle\rangle$ (because $\tau^*$ was the last element of the escape sequence).  Similarly $[\upsilon'_i]$ and $[\nu'_i]$ are contained in open loops with initial elements $\upsilon^0_i,\nu_i^0$ and final elements $\upsilon^*_i,\nu^*_i$.  Then we define $T_{n+1}(\tau^*)=\upsilon_0^0$, for $i<U$, $T_{n+1}(\upsilon_i^*)=\upsilon_{i+1}^0$, $T_{n+1}(\upsilon_U^*)=\nu_0^0$, and for $i<V$, $T_{n+1}(\tau_i^*)=\tau_{i+1}^0$.

Consider some element $x$ of $[\tau]$.  Let $k=2^{|\tau|}bd_{T_n}(\tau)$ be the length of the open loop in $T_n$ containing $\tau$.  In $T_{n+1}$ and any extension of $T_{n+1}$, we have ensured that $x$ is first mapped through $[\tau]$ for at most $2^{N+N'-|\tau|}k$ steps.  $x$ is then mapped through an escape sequence of length $e_t$.  $x$ is next mapped through the $[\upsilon_i]$ and their escape sequences; the $\upsilon_i$ account for
\[\sum_{i\leq U}2^{N+N'-|\upsilon_i|}-2^{N'-|\upsilon_i|}=(2^{N+N'}-2^{N'})\sum_{i\leq U}2^{-|\upsilon_i|}\geq 2^{N+N'}k2^{-|\tau|}+e_t\]
steps since $N,N'$ are sufficiently large.  In particular, this means that by the time $x$ reaches the end of the $[\upsilon_i]$, more than half these steps were in $A^n_k\subseteq A$.  Next, $x$ is mapped through the $[\nu_i]$ and their escape sequences.  Since
\[\sum_{i\leq V}2^{N+N'-|\nu_i|}-2^{N'-|\upsilon_i|}=(2^{N+N'}-2^{N'})\sum_{i\leq V}2^{-|nu_i|}>2^{N+N'}\cdot 2\cdot\sum_{i\leq U}2^{-|\upsilon_i|}+Ue_u,\]
 we have ensured that the number of steps in the $[\nu_i]$ is twice as large as the entire segment before we entered the $[\nu_i]$.  In particular, this ensures that, after leaving the $[\nu_i]$, at most one third of the steps were in $A$.

To find $A^{n+1}_k$ we remove all the $[\upsilon_i]$ and their escape sequences, and to find $B^{n+1}_k$ we remove all the $[\tau_i]$ and their escape sequences.  Naturally, we add these to $W^{n+1}_{k+1}$, along with $[\tau]$ and its escape sequence.  Now consider any element of the long open loop containing $[\tau]$, and let $\gamma$ be the burden of this element.  We choose $j$ large enough that $\mu(A^n_{k+1})-a^n_{k+1}>\gamma 2^{-j}$ and $\mu(B^n_{k+1}-b^n_{k+1})>\gamma\cdot 4\cdot 2^{-j}$, set $\rho^{n+1}$ to be $j$ for every element of this open loop, and set $a^{n+1}_{k+1}=a^n_{k+1}+\gamma 2^{-j}$ and $b^{n+1}_{k+1}=b^n_{k+1}+4\gamma 2^{-j}$.  Finally $a^{n+1}_k=a^n_k-\gamma_u$ where $\gamma_u$ is the sum of the measures of the escape sequences for the $\upsilon_i$, and $b^{n+1}_k=b^n_k-\gamma_v$ where $\gamma_v$ is the sum of the measures of the escape sequences for the $\nu_i$.  (We leave unchanged all other objects we maintain inductively---for instance, if $k'\neq k$ then $A^{n+1}_{k'}=A^n_k$ and so on.)

This completes the construction of $T_{n+1}$ and all associated objects in the case where we enumerate a single determined element into $\cup_j V_j$, as well as the proof that the construction is valid in that case.

We must confront one final complication: it may be that $\tau$ is not determined.  In this case we would like to split $[\tau]$ into a union $\cup_j[\tau^j]$ where the $\tau^j$ are determined.  To do so, we apply the construction just described to each $\tau^j$ repeatedly.  If multiple $\tau^j$ belong to the same open loop, we treat them simultaneously since the construction above applies to the entire open loop containing $[\tau]$.  The only possible source of interference is that we may alter the escape sequences of one $\tau^j$ while applying the construction to another one.  However we note that we do not need any sort of uniform bound on the length of escape sequences, and therefore this is not an obstacle.

\begin{figure}
  \begin{tikzpicture}
\draw (0,0) -- (0.5,0) -- (0.5,8.75) -- (0,8.75) -- (0,0);
\draw (0,1) -- (.5,1);
\draw[dotted] (0,0.167) -- (0.5,.167);
\draw[dotted] (0,0.333) -- (0.5,.333);
\draw[dotted] (0,0.5) -- (0.5,.5);
\draw[dotted] (0,0.667) -- (0.5,.667);
\draw[dotted] (0,0.833) -- (0.5,.833);
\draw[decoration={brace,mirror,amplitude=6pt},decorate] (0.5,0) -- (0.5,1) node[midway, right=3pt]{The open loop containing $\tau$};
\draw (0,1.75) -- (.5,1.75);
\draw[decoration={brace,amplitude=6pt},decorate] (0,1) -- (0,1.75) node[midway, left=3pt]{An escape sequence for $\tau$};

\draw (0,2.75) -- (.5,2.75);
\draw (0,3.5) -- (.5,3.5);
\draw (0,4.5) -- (.5,4.5);
\draw (0,5.25) -- (.5,5.25);
\draw[dotted] (0,2) -- (.5,2);
\draw[dotted] (0,2.25) -- (.5,2.25);
\draw[dotted] (0,2.5) -- (.5,2.5);
\draw[dotted] (0,3.75) -- (.5,3.75);
\draw[dotted] (0,4) -- (.5,4);
\draw[dotted] (0,4.25) -- (.5,4.25);
\draw[decoration={brace,mirror,amplitude=6pt},decorate] (0.5,1.75) -- (0.5,2.75) node[midway, right=3pt]{};
\draw[decoration={brace,mirror,amplitude=6pt},decorate] (0.5,7) -- (0.5,8) node[midway, right=3pt]{};
\node at (3.5,6.625) {\begin{minipage}{2in}Portions of $B^n_k$ with total area slightly larger than $bd_T(\tau)$.\end{minipage}};
\draw (0.7,5.75) arc (-90:0:0.5);
\draw (0.7,7.5) arc (90:5:0.5);
\draw[decoration={brace,amplitude=6pt},decorate] (0,6.25) -- (0,7) node[midway, left=3pt]{};
\draw[decoration={brace,amplitude=6pt},decorate] (0,8) -- (0,8.75) node[midway, left=3pt]{};
\node at (-2,7.75) {\begin{minipage}{1.5in}Escape sequences for the portions of $B^n_k$.\end{minipage}};
\draw (-0.2,6.625) arc (270:180:0.5);
\draw (-0.2,8.375) arc (90:180:0.5);

\draw (0,6.25) -- (.5,6.25);
\draw (0,7) -- (.5,7);
\draw (0,8) -- (.5,8);
\draw (0,8.75) -- (.5,8.75);
\draw[dotted] (0,5.5) -- (.5,5.5);
\draw[dotted] (0,5.75) -- (.5,5.75);
\draw[dotted] (0,6) -- (.5,6);
\draw[dotted] (0,7.25) -- (.5,7.25);
\draw[dotted] (0,7.5) -- (.5,7.5);
\draw[dotted] (0,7.75) -- (.5,7.75);
\draw[decoration={brace,mirror,amplitude=6pt},decorate] (0.5,5.25) -- (0.5,6.25) node[midway, right=3pt]{};
\draw[decoration={brace,mirror,amplitude=6pt},decorate] (0.5,3.5) -- (0.5,4.5) node[midway, right=3pt]{};
\node at (3.5,3.125) {\begin{minipage}{2in}Portions of $A^n_k$ with total area slightly larger than $bd_T(\tau)$.\end{minipage}};
\draw (0.7,2.25) arc (-90:0:0.5);
\draw (0.7,4) arc (90:5:0.5);
\draw[decoration={brace,amplitude=6pt},decorate] (0,2.75) -- (0,3.5) node[midway, left=3pt]{};
\draw[decoration={brace,amplitude=6pt},decorate] (0,4.5) -- (0,5.25) node[midway, left=3pt]{};
\node at (-2,4) {\begin{minipage}{1.5in}Escape sequences for the portions of $A^n_k$.\end{minipage}};
\draw (-0.2,3.125) arc (270:180:0.5);
\draw (-0.2,4.875) arc (90:180:0.5);

  \end{tikzpicture}
\caption{Construction of $T'$}
\label{one_step_figure}
\end{figure}
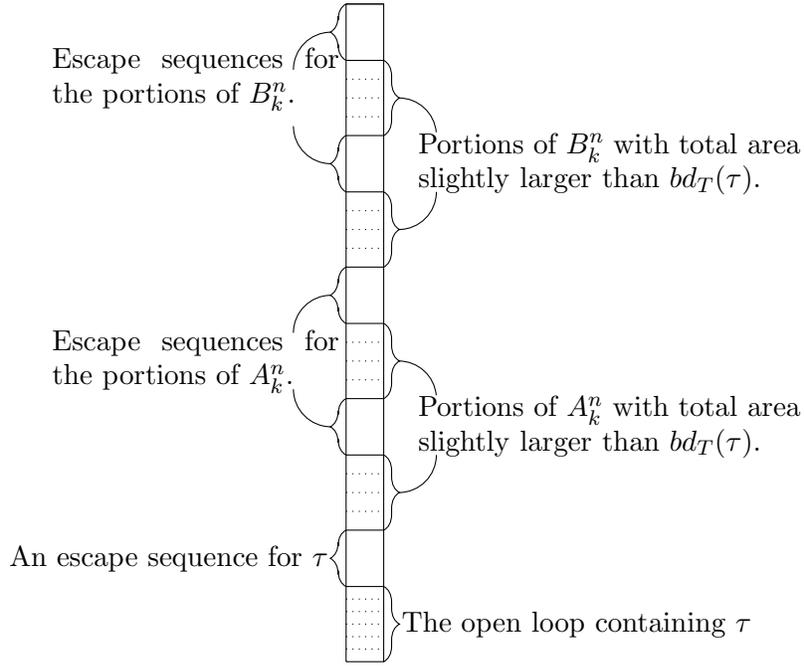
\end{proof}

\section{Upcrossings}\label{upcross}

Throughout this section, we will take $T$ to be a computable, measure-preserving transformation.

Recall the following theorem of Bishop's \cite{bishop:MR0228655}:
\begin{theorem}
\[\int \tau(x,f,\alpha,\beta)dx\leq\frac{1}{\beta-\alpha}\int(f-\alpha)^+dx.\]
\end{theorem}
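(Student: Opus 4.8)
The plan is to normalize the thresholds, pass to a finite time horizon, extract a pointwise bound on the (truncated) upcrossing count along each orbit, and then control its integral by a maximal ergodic theorem argument.

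\emph{Normalization and truncation.} Replacing $f$ by $g=(f-\alpha)/(\beta-\alpha)$ multiplies every average $\frac1{m}\sum_{j=0}^{m-1}f(T^jx)$ by $1/(\beta-\alpha)$ and then subtracts $\alpha/(\beta-\alpha)$, so it carries the threshold $\alpha$ to $0$ and $\beta$ to $1$; hence $\tau(x,f,\alpha,\beta)=\tau(x,g,0,1)$ for every $x$, and $\int g^+\,dx=\frac1{\beta-\alpha}\int(f-\alpha)^+\,dx$. So it suffices to prove $\int\tau(x,f,0,1)\,dx\le\int f^+\,dx$, and we may assume $\int f^+\,dx<\infty$. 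For $L\nat$ let $\tau_L(x)$ be the length of the longest upcrossing sequence $0\le u_1<v_1<\cdots<u_N<v_N\le L$ for $0,1$; this quantity is determined by $f(x),f(Tx),\dots,f(T^Lx)$, hence measurable, and $\tau_L(x)\uparrow\tau(x,f,0,1)$ pointwise, so by monotone convergence it is enough to bound $\int\tau_L\,dx\le\int f^+\,dx$ uniformly in $L$.

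\emph{A pointwise bound along orbits.} Fix $x$ and $L$ and run the greedy procedure: let $u_1$ be least with $\frac1{u_1+1}\sum_{j=0}^{u_1}f(T^jx)<0$, let $v_1>u_1$ be least with $\frac1{v_1+1}\sum_{j=0}^{v_1}f(T^jx)>1$, and continue; a standard exchange argument shows the number of pairs $(u_i,v_i)$ with $v_i\le L$ produced this way is exactly $\tau_L(x)$. For each such pair we have $\sum_{j=0}^{v_i}f(T^jx)>v_i+1$ while $\sum_{j=0}^{u_i}f(T^jx)<0$, so on the block $B_i=\{u_i+1,\dots,v_i\}$ we get $\sum_{j\in B_i}f(T^jx)>v_i+1>|B_i|\ge 1$, whence $\sum_{j\in B_i}f^+(T^jx)>1$. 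The blocks $B_i$ are pairwise disjoint, so with $E(x)=\bigcup_iB_i\subseteq\{0,\dots,L\}$ we obtain the pointwise estimate
\[\tau_L(x)\;\le\;\sum_{j\in E(x)}f^+(T^jx)\;\le\;\sum_{j=0}^{L}f^+(T^jx).\]

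\emph{Integration --- the crux.} Integrating the middle quantity term by term and using measure preservation gives only $\int\tau_L\,dx\le\sum_{j=0}^{L}\int f^+(T^jx)\,dx=(L+1)\int f^+\,dx$, which is too weak by a factor of $L+1$: this estimate spends the mass of a given orbit point once for each orbit prefix in whose upcrossing block that point happens to lie, whereas we must account each orbit point at most once. Removing this overcounting is the main obstacle. I would handle it exactly as in the proof of the Hopf--Garsia maximal ergodic theorem: a ``rising sun'' selection carried out along each orbit replaces the greedy blocks by blocks of positive $f$-sum that pack without excess overlap, so that the contributions add up correctly across orbits. Equivalently, one applies the maximal ergodic theorem to $f-1$ to obtain the maximal inequality $\lambda(\{\tau_L\ge 1\})\le\int f^+\,dx$ and then iterates it once for each upcrossing level $k=1,2,\dots$, reaching $\int\tau_L\,dx=\sum_{k\ge1}\lambda(\{\tau_L\ge k\})\le\int f^+\,dx$; setting up this iteration cleanly is the technical heart of the argument. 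The reason a direct transfer of Doob's martingale upcrossing lemma does not apply --- and the reason the maximal ergodic theorem is the right tool --- is that here the averages are always taken from the start of the orbit, so the ``clock'' cannot be reset after an upcrossing and the underlying increment process is not stationary. (Alternatively, one can reproduce Bishop's original argument, which carries out the same packing by an explicit finite combinatorial selection on each orbit segment and then integrates using measure preservation.)
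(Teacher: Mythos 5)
The paper does not prove this statement; it simply cites Bishop \cite{bishop:MR0228655}, so there is no in-paper proof to compare against. Your setup is sound: the affine normalization to $(\alpha,\beta)=(0,1)$, the truncation $\tau_L\uparrow\tau$ with monotone convergence, and the pointwise estimate $\tau_L(x)\le\sum_{j\in E(x)}f^+(T^jx)$ via disjoint greedy blocks $B_i=\{u_i+1,\dots,v_i\}$ with $\sum_{j\in B_i}f(T^jx)>1$ are all correct.

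However, the proof stops precisely at the step that carries the whole weight, and you say so yourself (``setting up this iteration cleanly is the technical heart of the argument''). Worse, the one concrete route you sketch does not work as written: the maximal ergodic theorem applied to $f-1$ does give $\lambda(\{\tau_L\ge 1\})\le\int f^+\,dx$, but applying it ``once for each upcrossing level $k$'' gives the same bound $\lambda(\{\tau_L\ge k\})\le\int f^+\,dx$ for every $k$, so $\sum_{k\ge 1}\lambda(\{\tau_L\ge k\})$ is not bounded by $\int f^+\,dx$ --- it need not even be finite from these estimates. To make an iteration of this kind work, the bound on level $k$ would have to charge only the part of $f^+$ along the orbit that lies strictly after the $(k-1)$st upcrossing, and that is exactly the packing you have not set up.

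What is actually needed --- and what Bishop's proof supplies --- is a purely combinatorial lemma about finite real sequences: for any $a_1,\dots,a_N$, if $U(a_i,\dots,a_N)$ denotes the maximal number of upcrossings (in the averaged sense, from below $0$ to above $1$) of the tail sequence starting at index $i$, then $\sum_{i=1}^N U(a_i,\dots,a_N)\le\sum_{i=1}^N a_i^+$, proved by a Vitali/rising-sun disjoint selection of the upcrossing intervals along a single orbit segment. One then feeds $a_i=f(T^{i-1}x)$ into this lemma, integrates using measure preservation, and applies Calder\'on transference ($\,$divide by $N$, let $N\to\infty$, compare to $\int\tau_L$). Note that your pointwise bound on $\tau_L(x)$ --- which only inspects upcrossings for the single starting point $x$ --- cannot be integrated directly; the combinatorial lemma has to account simultaneously for upcrossings started at every point of a long orbit segment, because the ``clock'' restarts at each $T^{i}x$, as you observe. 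Until that lemma (or an equivalent packing argument) is stated and proved, the proposal has a genuine gap at its crux.
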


This is easily used to derive the following theorem of V'yugin:
\begin{theorem}[\cite{v97}]
If $x$ is \ml random and $f$ is computable then $\lim_{n\rightarrow\infty}\frac{1}{n+1}\sum_{j=0}^nf(T^jx)$ converges.
\end{theorem}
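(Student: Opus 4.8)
The plan is to derive the theorem from Bishop's inequality by a soft measure-covering argument: I will show that for each pair of rationals $\alpha<\beta$ the set of $x$ at which the Birkhoff averages of $f$ upcross $[\alpha,\beta]$ infinitely often is contained in the intersection of a \ml test, so that a \ml random $x$ avoids all of them and hence has convergent averages.

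First I would record the real-analysis reduction. Since $f$ is computable it is continuous, hence bounded on the compact space $\reals$, so the averages $A_n(x):=\frac{1}{n+1}\sum_{j=0}^n f(T^jx)$ all lie in a fixed bounded interval. A bounded sequence of reals converges if and only if there is no pair of rationals $\alpha<\beta$ for which it is below $\alpha$ infinitely often and above $\beta$ infinitely often, and this last situation is exactly $\tau(x,f,\alpha,\beta)=\infty$ (a length-$N$ upcrossing sequence exhibits $N$ indices where $A_n<\alpha$ and $N$ where $A_n>\beta$, and conversely two such infinite index sets can be greedily interleaved into arbitrarily long upcrossing sequences). So it is enough to show that for every pair of rationals $\alpha<\beta$ the set $\{x:\tau(x,f,\alpha,\beta)=\infty\}$ lies inside $\bigcap_i[V_i]$ for some \ml test $\langle V_i\rangle$; since there are only countably many such pairs and $x$ is \ml random, we would then get $\tau(x,f,\alpha,\beta)<\infty$ for all of them, and convergence follows.

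To build the test, fix rationals $\alpha<\beta$. Bishop's inequality gives $\int\tau(x,f,\alpha,\beta)\,dx\leq\frac{1}{\beta-\alpha}\int(f-\alpha)^+\,dx$, and because $f$ is computable the right-hand side is a computable real, so I can fix a rational $c$ strictly above it. Set $U_i=\{x:\tau(x,f,\alpha,\beta)\geq\lceil c\,2^i\rceil\}$. Markov's inequality then bounds $\lambda(U_i)\leq\big(\lceil c\,2^i\rceil\big)^{-1}\int\tau(x,f,\alpha,\beta)\,dx\leq c/(c\,2^i)=2^{-i}$. It remains to see that $\langle U_i\rangle$ is uniformly effectively open. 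Since $T$ and $f$ are computable, each $A_n$ is a computable function of $x$, uniformly in $n$, so $\{x:A_n(x)<\alpha\}$ and $\{x:A_n(x)>\beta\}$ are effectively open uniformly in $n$; and $\tau(x,f,\alpha,\beta)\geq m$ holds exactly when there are integers $0\leq u_1<v_1<\cdots<u_m<v_m$ with $A_{u_\ell}(x)<\alpha$ and $A_{v_\ell}(x)>\beta$ for all $\ell\leq m$, which is a computable union over such tuples of finite intersections of the sets just described. Hence $U_i=[V_i]$ for a uniformly c.e.\ sequence $\langle V_i\rangle$ of subsets of $\strings$, and $\langle V_i\rangle$ is a \ml test whose intersection contains $\{x:\tau(x,f,\alpha,\beta)=\infty\}$, completing the argument via the reduction above.

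The substance is all in Bishop's inequality (which is quoted) and the boundedness of $\tau$ it yields; the part requiring care is the effectivity bookkeeping — that the finite averages $A_n$ are computable functions of $x$ uniformly in $n$ (this is where computability of $T$ and $f$ enters, and, if $T$ is only defined off a computable $\mathcal{G}_\delta$ null set, the observation that this null set is itself covered by a \ml test so that a \ml random $x$ lies in the domain of every $T^j$) and that the threshold $c$ is computable from $\alpha$ and a code for $f$. Everything else is the routine fact that a uniformly effectively open family $\langle U_i\rangle$ with $\lambda(U_i)\leq 2^{-i}$ is a \ml test.
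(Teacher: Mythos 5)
Your proof is correct and follows essentially the same route as the paper's: invoke Bishop's inequality to bound $\int \tau(x,f,\alpha,\beta)\,dx$, observe that $\tau(\cdot,f,\alpha,\beta)$ is lower semi-computable when $f$ and $T$ are computable, and apply Markov's inequality along an appropriate subsequence of thresholds to obtain a Martin-L\"of test capturing $\{x : \tau(x,f,\alpha,\beta)=\infty\}$ for each rational pair $\alpha<\beta$. You merely spell out more of the effectivity bookkeeping (the explicit thresholds $\lceil c\,2^i\rceil$ and the treatment of the null set where $T$ may be undefined) that the paper leaves implicit.
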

\begin{proof}
Suppose $\lim_{n\rightarrow\infty}\frac{1}{n+1}\sum_{j=0}^nf(T^jx)$ does not converge.  Then there exist $\alpha<\beta$ such that $\frac{1}{n+1}\sum_{j=0}^nf(T^jx)$ is infinitely often less than $\alpha$ and also infinitely often greater than $\beta$.  Equivalently, $\tau(x,f,\alpha,\beta)$ is infinite.  But observe that when $f$ is computable, $\tau(x,f,\alpha,\beta)$ is lower semi-computable, so in particular,
\[V_n=\{x\mid \tau(x,f,\alpha,\beta)\geq n\}\]
is computably enumerable and $\mu(V_n)\leq \frac{1}{n(\beta-\alpha)}\int(f-\alpha)^+dx$.  Therefore an appropriate subsequence of $\langle V_n\rangle$ provides a \ml test, and $x\in\cap_n V_n$, so $x$ is not \ml random.
\end{proof}

We now consider the case where $f$ is lower semi-computable.  We will have a sequence of uniformly computable increasing approximations $f_i\rightarrow f$, and we wish to bound the number of upcrossings in $f$.  The difficulty is that $\tau(x,f_i,\alpha,\beta)$ is not monotonic in $i$: it might be that an upcrossing sequence for $f_i$ ceases to be an upcrossing sequence for $f_{i+1}$.

In order to control this change, we need a suitable generalization of upcrossings, where we consider not only the upcrossings for $f$, but for all functions between $f$ and $f+h$ where $h$ is assumed to be small.
\begin{definition}
A \emph{loose upcrossing sequence} for $\alpha,\beta,f,h$ is a sequence
\[0\leq u_1<v_1<u_2<v_2<\cdots<u_N<v_N\]
such that for all $i\leq N$,
\[\frac{1}{u_i+1}\sum_{j=0}^{u_i}f(T^jx)<\alpha,\ \frac{1}{v_i+1}\sum_{j=0}^{v_i}(f+h)(T^jx)>\beta.\]

$\upsilon(x,f,h,\alpha,\beta)$ is the supremum of the lengths of loose upcrossing sequences for $\alpha,\beta,f,h$.
\end{definition}

Loose upcrossings are much more general than we really need, and so the analog of Bishop's theorem is correspondingly weak.  For instance, consider the case where $T$ is the identity transformation, $f=\chi_A$, and $h=\chi_B$ with $A$ and $B$ disjoint (so $f+h=\chi_{A\cup B}$).  Then $\upsilon(x,f,h,\alpha,\beta)=\infty$ whenever $0<\alpha<\beta<1$. Nonetheless, we are able to show the following:
\begin{theorem}
Suppose $h\geq 0$, $\int h\ dx<\epsilon$ and $\beta-\alpha>\delta$.  There is a set $A$ with $\mu(A)<4\epsilon/\delta$ such that
\[\int_{X\setminus A}\upsilon(x,f,h,\alpha,\beta)dx\]
is finite.
\label{loose_upcrossing_lemma}
\end{theorem}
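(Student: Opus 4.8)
The plan is to peel off a small exceptional set $A$ consisting of the points whose ergodic averages of $h$ ever rise above $\delta/2$, and then to observe that off $A$ a loose upcrossing sequence for $f,h$ is already an honest upcrossing sequence for $f$ between the thresholds $\alpha$ and $\beta-\delta/2$; this reduces the theorem to Bishop's inequality applied to $f$. Concretely, I would set
\[ A=\Bigl\{x\in X:\ \sup_{n\ge 0}\frac{1}{n+1}\sum_{j=0}^n h(T^jx)>\frac{\delta}{2}\Bigr\} \]
and apply the standard maximal ergodic inequality (a consequence of the maximal ergodic theorem, and the tool behind the usual proof of Birkhoff's theorem) to the nonnegative function $h\in L_1(X)$: this yields $\mu(A)\le\frac{2}{\delta}\int h\,dx<\frac{2\epsilon}{\delta}\le\frac{4\epsilon}{\delta}$, so $A$ has the required measure. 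Here $A$ is measurable, being the countable union over $n$ of the sets on which the $n$-th ergodic average of $h$ exceeds $\delta/2$.

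Next I would show that $\upsilon(x,f,h,\alpha,\beta)\le\tau(x,f,\alpha,\beta-\delta/2)$ for every $x\notin A$. Fix such an $x$ and a loose upcrossing sequence $0\le u_1<v_1<\cdots<u_N<v_N$ for $\alpha,\beta,f,h$. For each $i\le N$, since $x\notin A$ we have $\frac{1}{v_i+1}\sum_{j=0}^{v_i}h(T^jx)\le\frac{\delta}{2}$; subtracting this from $\frac{1}{v_i+1}\sum_{j=0}^{v_i}(f+h)(T^jx)>\beta$ gives $\frac{1}{v_i+1}\sum_{j=0}^{v_i}f(T^jx)>\beta-\frac{\delta}{2}$. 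Together with $\frac{1}{u_i+1}\sum_{j=0}^{u_i}f(T^jx)<\alpha$ and the inequality $\alpha<\beta-\delta/2$ (which holds because $\beta-\alpha>\delta$), the same sequence $0\le u_1<v_1<\cdots<u_N<v_N$ is an upcrossing sequence for $\alpha,\beta-\delta/2$ and $f$ in the original sense. Passing to the supremum over all loose upcrossing sequences gives the claimed domination; in particular $\upsilon(\cdot,f,h,\alpha,\beta)$, whose measurability follows from the countable-union description of $\{x:\upsilon(x,f,h,\alpha,\beta)\ge N\}$, is $\mu$-a.e.\ finite on $X\setminus A$.

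Finally I would integrate and apply Bishop's theorem, using that the gap $(\beta-\delta/2)-\alpha$ exceeds $\frac{\delta}{2}$:
\[ \int_{X\setminus A}\upsilon(x,f,h,\alpha,\beta)\,dx\ \le\ \int\tau(x,f,\alpha,\beta-\delta/2)\,dx\ \le\ \frac{1}{(\beta-\delta/2)-\alpha}\int(f-\alpha)^+\,dx\ \le\ \frac{2}{\delta}\int(f-\alpha)^+\,dx, \]
which is finite since $f\in L_1(X)$. The only substantive ingredient here is the maximal ergodic inequality; everything else is bookkeeping. The point that needs care is the alignment of strict and non-strict inequalities in the definition of $A$ and in the maximal inequality, so that $x\notin A$ genuinely forces $\frac{1}{v_i+1}\sum_{j=0}^{v_i}h(T^jx)\le\frac{\delta}{2}$ at \emph{every} time $v_i$ and not merely at the first one. (Any threshold $c\in(0,\beta-\alpha)$ works in place of $\delta/2$; taking $c=\delta/4$ produces the stated constant $4\epsilon/\delta$ exactly.)
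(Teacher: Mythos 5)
Your proof is correct, and it takes a genuinely different and cleaner route than the paper's. The paper invokes the (nonconstructive) pointwise ergodic theorem to find a time $n$ past which ergodic averages of $h$ have nearly stabilized (the set $A'$), and only then peels off a second set $A''$ of points whose averages past time $n$ exceed $\delta$; because the control on $h$-averages only kicks in after time $n$, an extra additive ``$+n$'' appears in the final integral bound. You instead define $A$ in a single step as the maximal set $\{x:\sup_n\frac{1}{n+1}\sum_{j=0}^{n}h(T^jx)>\delta/2\}$ and bound its measure directly by the Hopf maximal ergodic inequality, which immediately gives $\mu(A)\leq\frac{2}{\delta}\int h\,dx<2\epsilon/\delta$ (a better constant than the stated $4\epsilon/\delta$), and also controls $h$-averages at \emph{all} times, so loose upcrossings off $A$ are honest upcrossings for $(\alpha,\beta-\delta/2)$ with no time cutoff. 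This buys a shorter argument, a sharper constant, and an integral bound without the extraneous $n$ term; it also partially addresses the paper's own complaint (in the ``Room for Improvement'' subsection) that the given proof is ``oddly half-constructive,'' since the maximal inequality is the constructive ingredient underlying Birkhoff's theorem, whereas the appeal to the full pointwise ergodic theorem is not.
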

\begin{proof}
By the usual pointwise ergodic theorem, there is an $n$ and a set $A'$ with $\mu(A')<2\epsilon/\delta$ such that if $x\not\in A'$ then for all $n',n''\geq n$,
\[\left|\frac{1}{n'+1}\sum_{j=0}^{n'}h(T^jx)-\frac{1}{n''+1}\sum_{j=0}^{n''}h(T^jx)\right|<\delta/2.\]

Consider those $x\not\in A'$ such that, for some $n'\geq n$,
\[\frac{1}{n'+1}\sum_{j=0}^{n'}h(T^jx)\geq \delta.\]
We call this set $A''$.  Then for all $n'\geq n$, such an $x$ satisfies
\[\frac{1}{n'+1}\sum_{j=0}^{n'}h(T^jx)\geq \delta/2,\]
and in particular,
\[\int_{A''}h\;dx\geq \delta\mu(A'')/2.\]
Therefore $\mu(A'')\leq 2\epsilon/\delta$.  If we set $A=A'\cup A''$, we have $\mu(A)< 4\epsilon/\delta$.

Now suppose $x\not\in A$.  We claim that any loose upcrossing sequence for $\alpha,\beta,f,h$ with $n\leq u_1$ is already an upcrossing sequence for $\alpha,\beta-\delta$.  If $n\leq u_1<v_1<\cdots<u_N<v_N$ is a loose upcrossing sequence, we automatically satisfy the condition on the $u_i$.  For any $v_i$, we have
\[\beta<\frac{1}{v_i+1}\sum_{j=0}^{v_i}(f+h)(T^jx)=\frac{1}{v_i+1}\sum_{j=0}^{v_i}f(T^jx)+\frac{1}{v_i+1}\sum_{j=0}^{v_i}h(T^jx).\]
Since $\frac{1}{v_i+1}\sum_{j=0}^{v_i}h(T^jx)\leq\delta$, it follows that $\frac{1}{v_i+1}\sum_{j=0}^{v_i}f(T^jx)>\beta-\delta$ as desired.  Therefore
\[\int_{X\setminus A}\upsilon(x,f,h,\alpha,\beta)dx\leq \mu(X\setminus A)\int_{X\setminus A}n+\tau(x,f,\alpha,\beta-\delta)dx\]
is bounded.
\end{proof}

\begin{theorem}\label{weak2random}
If $x$ is weakly $2$-random and $f$ is lower semi-computable then $\lim_{n\rightarrow\infty}\frac{1}{n+1}\sum_{j=0}^nf(T^jx)$ converges.
\end{theorem}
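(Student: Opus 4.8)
The plan is to argue by contradiction, producing a generalized \ml test that $x$ fails. Fix a uniformly computable increasing sequence $f_i\nearrow f$, and suppose $\lim_n\frac{1}{n+1}\sum_{j=0}^n f(T^jx)$ diverges, so that there are rationals $\alpha<\beta$ with $\tau(x,f,\alpha,\beta)=\infty$. The conceptually central point is that, although $\tau(\cdot,f_i,\alpha,\beta)$ is \emph{not} monotone in $i$, loose upcrossings tame the approximation: setting $h_i=f-f_i\geq 0$, every upcrossing sequence for $\alpha,\beta,f$ at a point is a loose upcrossing sequence for $\alpha,\beta,f_i,h_i$ at that point, since the ``down'' inequalities only improve when $f$ is replaced by the smaller $f_i$ and the ``up'' inequalities are unchanged because $f_i+h_i=f$. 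Hence $\tau(y,f,\alpha,\beta)\leq\upsilon(y,f_i,h_i,\alpha,\beta)$ for all $y$ and all $i$; in particular $x\in\{y:\upsilon(y,f_i,h_i,\alpha,\beta)=\infty\}$ for every $i$.

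I would then verify that $V_{i,N}:=\{y:\upsilon(y,f_i,h_i,\alpha,\beta)\geq N\}$ is effectively open, uniformly in $i$ and $N$. This is exactly where computability of $f_i$ is used: the condition $\frac{1}{u+1}\sum_{j\le u}f_i(T^jy)<\alpha$ is effectively open in $y$, and the condition $\frac{1}{v+1}\sum_{j\le v}(f_i+h_i)(T^jy)=\frac{1}{v+1}\sum_{j\le v}f(T^jy)>\beta$ is effectively open because $f$ is lower semi-computable; $V_{i,N}$ is then a countable union over finite tuples $u_1<v_1<\dots<u_N<v_N$ of finite intersections of these. Note that $h_i$ is only the difference of two lower semi-computable functions, but it appears solely via $f_i+h_i=f$, so this causes no trouble. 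The measure bound comes directly from Theorem \ref{loose_upcrossing_lemma}: fixing a rational $\delta$ with $0<\delta<\beta-\alpha$ and, for each $i$, a rational $\epsilon_i>\int h_i\,dx$, the lemma yields a set $A_i$ with $\lambda(A_i)<4\epsilon_i/\delta$ off which $\upsilon(\cdot,f_i,h_i,\alpha,\beta)$ is integrable, hence finite almost everywhere; thus $\lambda(\bigcap_N V_{i,N})=\lambda(\{y:\upsilon(y,f_i,h_i,\alpha,\beta)=\infty\})\leq 4\epsilon_i/\delta$. Assuming $f\in L_1$ (if $\int f\,dx=\infty$ the averages diverge to $+\infty$ and there is nothing to prove), monotone convergence gives $\int h_i\,dx=\int f\,dx-\int f_i\,dx\to 0$, so the $\epsilon_i$ may be taken to tend to $0$, and therefore the $\Pi^0_2$ class $\bigcap_{i,N}V_{i,N}$ is null.

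To finish, enumerate the pairs $(i,N)$ as $p_0,p_1,\dots$ and set $G_m=\bigcap_{l\leq m}V_{p_l}$. Each $G_m$ is effectively open uniformly in $m$, the sequence $\langle G_m\rangle$ is decreasing, and $\bigcap_m G_m=\bigcap_{i,N}V_{i,N}$ is null, so $\lambda(G_m)\to 0$ by continuity from above. Thus $\langle G_m\rangle$ is a generalized \ml test, and by the first step $x\in\bigcap_{i,N}V_{i,N}=\bigcap_m G_m$, contradicting the weak $2$-randomness of $x$. The main obstacle — and the reason one cannot simply imitate V'yugin's argument with $V_N=\{y:\tau(y,f,\alpha,\beta)\geq N\}$ — is precisely that $\tau(\cdot,f,\alpha,\beta)$ fails to be lower semi-computable when $f$ is only lower semi-computable; passing to loose upcrossings and invoking Theorem \ref{loose_upcrossing_lemma} is what repairs this. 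A secondary subtlety is that we never effectively compute the measure bounds $\lambda(G_m)$: we only need the intersection $\bigcap_{i,N}V_{i,N}$ to be genuinely null, which the lemma supplies, and the decreasing reindexing then forces $\lambda(G_m)\to 0$ automatically.
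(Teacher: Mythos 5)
Your proof is correct and follows essentially the same approach as the paper: assume divergence, pass from $\tau$ to loose upcrossings with $h_i = f - f_i$ to obtain effectively open sets, invoke Theorem \ref{loose_upcrossing_lemma} to drive the measures of the nested intersections to zero, and conclude via a generalized \ml test. The paper phrases the test sets as $V_n = \{x \mid \exists m\geq n\ \upsilon(x,f_n,f_m-f_n,\alpha,\beta)\geq n\}$, which is the diagonal $V_{n,n}$ of your two-indexed family (the existential over $m$ with computable $f_m-f_n$ is equivalent, via the finitary monotone-convergence argument, to using $f-f_n$ directly as you do), and the paper observes directly that the $V_n$ are nested so no reindexing step is needed; these are cosmetic differences.
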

\begin{proof}
Suppose $\lim_{n\rightarrow\infty}\frac{1}{n+1}\sum_{j=0}^nf(T^jx)$ does not converge.  Then there exist $\alpha<\beta$ such that $\frac{1}{n+1}\sum_{j=0}^nf(T^jx)$ is infinitely often less than $\alpha$ and also infinitely often greater than $\beta$.  Equivalently, $\tau(x,f,\alpha,\beta)$ is infinite.  Let $f_n\rightarrow f$ be the sequence of computable functions approximating $f$ from below.

For each $n$, we set
\[V_n=\{x\mid\exists m\geq n\ \upsilon(x,f_n,f_{m}-f_n,\alpha,\beta)\geq n\}.\]
By construction, $x\in\cap_n V_n$.  To see that $V_{n+1}\subseteq V_n$, observe that if
\[\upsilon(x,f_{n+1},f_{m}-f_{n+1},\alpha,\beta)\geq n+1\]
then there is a loose upcrossing sequence witnessing this, and it is easy to check (since the $f_n$ are increasing) that this is also a loose upcrossing sequence witnessing
\[\upsilon(x,f_{n},f_{m}-f_{n},\alpha,\beta)\geq n+1>n.\]

We must show that $\mu(V_n)\rightarrow 0$.  Fix $\delta<\beta-\alpha$ and let $\epsilon>0$ be given.  Choose $n$ to be sufficiently large that $||f_n-f||<\delta\epsilon/4$.  Then, since the $f_n$ approximate $f$ from below, clearly $\upsilon(x,f_m,f_{m'}-f_m,\alpha,\beta)\leq\upsilon(x,f_m,f-f_m,\alpha,\beta)$ for any $m'\geq m$.  By the previous theorem, there is a set $A$ with $\mu(A)<\epsilon/2$ such that $\int_{X\setminus A}\upsilon(x,f_m,f-f_m,\alpha,\beta)dx$ is bounded.  We may choose $n'\geq n$ sufficiently large that
\[B=\mu(\{x\not\in A\mid\upsilon(x,f_m,f-f_m,\alpha,\beta)\geq n'\})<\epsilon/2.\]
Then $V_{n'}\subseteq A\cup B$, so $\mu(V_{n'})\leq\epsilon$.
\end{proof}

\subsection{Room for Improvement}

It is tempting to try to improve Theorem \ref{loose_upcrossing_lemma}.  The premises of that theorem are too general and the proof is oddly ``half-constructive''---we mix the constructive and nonconstructive pointwise ergodic theorems.  One would think that by tightening the assumptions and using Bishop's upcrossing version of the ergodic theorem in both places, we could prove something stronger.

In the next theorem, we describe an improved upcrossing property which, if provable, would lead to a substantial improvement to Theorem \ref{weak2random}: balanced randomness would guarantee the existence of this limit. (Recall that a real is \emph{balanced random} if it passes every balanced test, or sequence $\langle V_i\rangle$ of r.e.\ sets such that $V_i=W_{f(i)}$ for some $2^n$-r.e.\ function $f$ and $\mu([V_i])\leq 2^{-i}$ for every $i$ \cite{fhmnn10}.) The property hypothesized seems implausibly strong, but we do not see an obvious route to ruling it out.

\begin{theorem}
  Suppose the following holds:
  \begin{quote}
    Let $f$ and $\epsilon>0$ be given, and let $0\leq h_0\leq h_1\leq \cdots\leq h_n$ be given with $||h_n||_{L^\infty}<\epsilon$.  Then
\[\int_{X}\sup_n\tau(x,f+h_n,\alpha,\beta)dx<c(||f||_{L^\infty},\epsilon)\]
where $c(||f||_{L^\infty},\epsilon)$ is a computable bound depending only on $||f||_{L^\infty}$ and $\epsilon$.
  \end{quote}
Then whenever $x$ is balanced random and $f$ is lower semi-computable then $\lim_{n\rightarrow\infty}\frac{1}{n+1}\sum_{j=0}^nf(T^jx)$ converges.
\end{theorem}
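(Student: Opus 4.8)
The plan is to imitate the proof of Theorem~\ref{weak2random}, but to cash in the two extra features of the hypothesized estimate: it controls $\int_X\sup_n\tau(x,f+h_n,\alpha,\beta)\,dx$ with \emph{no exceptional set} and by a \emph{computable} bound. Consequently Markov's inequality will produce test components of measure $\le 2^{-i}$, rather than merely the sequence of sets of measure tending to $0$ that \ref{loose_upcrossing_lemma} gave. Assume $x$ is balanced random, $f$ is lower semi-computable, and the averages $\frac{1}{n+1}\sum_{j=0}^{n}f(T^jx)$ fail to converge; fix rationals $\alpha<\beta$ with $\tau(x,f,\alpha,\beta)=\infty$ and a uniformly computable increasing sequence of step functions $f_m\nearrow f$.

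For each level $N$ set $g_N=\min(f,N)$ and $(g_N)_m=\min(f_m,N)$; these are uniformly computable step functions bounded by $N$ with $(g_N)_m\nearrow g_N$. A single finite upcrossing sequence for $f$ at $x$ involves only finitely many finite values $f(T^jx)$, so it is an upcrossing sequence for $g_N$ once $N$ is large, and, $g_N$ being bounded, an upcrossing sequence for $(g_N)_m$ once $m$ is large; hence $\sup_N\sup_m\tau(x,(g_N)_m,\alpha,\beta)=\infty$. I would then feed the base function $(g_N)_0$ and the increasing perturbations $h_m=(g_N)_m-(g_N)_0$ (with $\|h_m\|_{L^\infty}\le N<N+1$) into the hypothesized property; since $(g_N)_0+h_m=(g_N)_m$, monotone convergence upgrades its conclusion to
\[\int_X\sup_m\tau(y,(g_N)_m,\alpha,\beta)\,dy\le c(\|(g_N)_0\|_{L^\infty},N+1)=:C(N),\]
a constant computable uniformly in $N$ because $(g_N)_0$ is a step function with computable sup norm. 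The structural point is that for each fixed $N$ and threshold $t$ the set $E_{N,t}=\{y:\exists m\ \tau(y,(g_N)_m,\alpha,\beta)\ge t\}$ is computably enumerable — search over $m$ and over candidate upcrossing sequences of length $t$ for $(g_N)_m$ — and, by Markov's inequality, $\mu([E_{N,t}])\le C(N)/t$.

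If $f$ were already known to be bounded by a computable constant $M$, we could take $N=M$ once and for all, and the sets $V_i=E_{M,\,2^iC(M)}$ would be an ordinary \ml test with $x\in\bigcap_iV_i$, contradicting \ml (hence balanced) randomness. So the real issue, and where balanced randomness enters, is that a lower semi-computable $f$ is only approached from below, and we can never certify a bound on $\|f\|_{L^\infty}$. The plan is to spend the $2^n$-revision budget of a balanced test on a search through the levels $N=1,2,3,\dots$: the level-$i$ test maintains a current level $N$, publishes the c.e.\ set $E_{N,\,2^iC(N)}$ (whose measure is always $\le 2^{-i}$), and moves to $N+1$ — discarding the old set and restarting a fresh one with the same measure bound — when, guided by the growing approximations to $f$ and by the lengths of the upcrossing sequences already seen, the current level is revealed to be too small. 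One then verifies that the number of such revisions is dominated by a $2^n$-bounded function, and that, because $\sup_N\sup_m\tau(x,(g_N)_m,\alpha,\beta)=\infty$, the search eventually settles at some $N$ whose published set contains $x$, simultaneously for every $i$; this contradicts balanced randomness.

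I expect the last point to be the main obstacle. Keeping the number of index revisions inside the $2^n$ budget while \emph{still} guaranteeing that some affordable component (measure $\le 2^{-i}$) captures $x$ forces a delicate trade-off between the truncation level $N$, the threshold $2^iC(N)$, and the actual rate at which $\sup_m\tau(x,(g_N)_m,\alpha,\beta)$ grows with $N$ — and this trade-off is exactly what is sensitive to how fast the hypothetical constant $c(\|f\|_{L^\infty},\epsilon)$ is allowed to grow in its first argument. Making this work, or recognizing that it cannot be made to work with only a $2^n$ budget, is the genuinely speculative part of the argument.
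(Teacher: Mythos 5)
Your proposal correctly identifies the high-level shape of the argument (proceed by contradiction, use Markov's inequality on the hypothesized integral bound to get a sequence of small c.e.\ sets, conclude a failure of balanced randomness), but the way you invoke the hypothesis is essentially backwards from what the paper does, and this is where the paper's key idea lives. The paper fixes $f$ (after rescaling so its norm is at most $1$), takes the increasing approximations $f_m\nearrow f$, and for each $n$ tracks the least index $g(n)$ after which all approximations are within $2^{-n}$ of $f$. The hypothesized bound is then applied with base function $f_{g(n)}$, perturbations $h_m=f_m-f_{g(n)}$ for $m\geq g(n)$, and $\epsilon=2^{-n}$ \emph{small}; the relevant constant $c(\|f\|_{L^\infty},2^{-n})$ therefore stays controlled, and the sets $V_{(g(n),h(n))}=\{y\mid \exists m\geq g(n):\tau(y,f_m,\alpha,\beta)\geq h(n)\}$ have measure at most $2^{-n}$ once $h$ is chosen to dominate $2^n\cdot c(\|f\|_{L^\infty},2^{-n})$. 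The crucial point is that $g(n,\cdot)$ is a $2^n$-bounded revision function: because $f$ has bounded norm and the $f_m$ increase monotonically, $\|f-f_m\|$ can drop by $2^{-n}$ at most $2^n$ times, so the index $g(n)$ changes at most $2^n$ times. That $2^n$ is exactly the balanced test's revision budget --- the match between $\epsilon=2^{-n}$ and the $2^n$ budget is the content of the argument.

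Your proposal instead feeds the hypothesis a \emph{large} $\epsilon=N+1$ after truncating $f$ at height $N$, and then plans to search over the truncation level. This misses the mechanism above and introduces two difficulties you cannot close. First, the hypothesis is only informative for small $\epsilon$: for $\epsilon$ comparable to (or larger than) $\beta-\alpha$, the bound $c(\cdot,\epsilon)$ may be enormous or the statement may simply be false, since a large increasing perturbation can legitimately create many upcrossings; so you are relying on the hypothesis in precisely the regime where it is weakest, and your thresholds $2^iC(N)$ grow with $N$. Second, and more fundamentally, the ``level search'' has no $2^n$-bounded revision structure to offer. The number of times you bump $N$ upward is governed by how fast you discover that $\|f\|_{L^\infty}$ exceeds the current level, and nothing in lower semi-computability caps this at $2^n$; indeed, for unbounded $f$ the level never settles. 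By contrast, the paper's revision count is tied not to the size of $f$ but to how many times the approximation error can shrink by a dyadic step, which \emph{is} intrinsically $2^n$-bounded. You flagged the revision-budget issue as ``genuinely speculative,'' which is fair, but the honest diagnosis is that your choice of where to apply the hypothesis forfeits the very structure that balanced randomness is designed to exploit; the fix is not a cleverer bookkeeping scheme but applying the hypothesis with $\epsilon=2^{-n}$ to the tail of the approximating sequence past $g(n)$, as in the paper.
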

\begin{proof}
We assume $||f||_{L^2}\leq 1$ (if not, we obtain this by scaling).  Suppose $\lim_{n\rightarrow\infty}\frac{1}{n+1}\sum_{j=0}^nf(T^jx)$ does not converge.  Then there exist $\alpha<\beta$ such that $\frac{1}{n+1}\sum_{j=0}^nf(T^jx)$ is infinitely often less than $\alpha$ and also infinitely often greater than $\beta$.  Equivalently, $\tau(x,f,\alpha,\beta)$ is infinite.  Let $f_n\rightarrow f$ be the sequence of computable functions approximating $f$ from below.

We define the set
\[V_{(n,k)}=\{x\mid\exists m\geq n\ \tau(x,f_m,\alpha,\beta)\geq k\}.\]
We then define the function $g(n,n')$ to be least such that $\forall m\in[n,n']\ ||f_{n'}-f_m||<2^{-n}$ and $g(n)=\lim_{n'}g(n,n')$.  Since the sequence $f_m$ converges to $f$ from below, $g(n)$ is defined everywhere, and $|\{s\mid g(n,s+1)\neq g(n,s)\}|<2^n$ for all $n$.  Indeed, $g(n)$ is the least number such that $\forall m\geq g(n)\ ||f-f_m||\leq2^{-n}$.

Observe that $\mu( V_{(n,k)})<\frac{c(||f||_{L^\infty},2^{-n})}{k}$.  Choose $h(n)$ to be a computable function growing quickly enough that $\frac{c(||f||_{L^\infty},2^{-n})}{h(n)}\leq 2^{-n}$ for all $n$.  If $x\in V_{(g(n+1),h(n+1))}$ then there is some $m\geq g(n+1)$ so that $\tau(x,f_m,\alpha,\beta)\geq h(n+1)$.  Since $g(n+1)\geq g(n)$ and $h(n+1)\geq h(n)$, we also have that $x\in V_{(g(n),h(n))}$.  Therefore $\langle V_{(g(n),h(n))}\rangle$ is a balanced test.

But since $\tau(x,f,\alpha,\beta)$ is infinite, we must have $x\in\cap  V_{(g(n),h(n))}$. This contradicts the assumption that $x$ is balanced random, so $\lim_{n\rightarrow\infty}\frac{1}{n+1}\sum_{j=0}^nf(T^jx)$ converges.
\end{proof}
In fact, the test $\langle V_{(g(n),h(n))}\rangle$ has an additional property: if $s_0<s_1<s_2$ with $g(n+1,s_0)\neq g(n+1,s_1)\neq g(n+1,s_2)$ then $g(n,s_0)\neq g(n,s_2)$.  This means that $\langle V_{(g(n),h(n))}\rangle$ is actually an \emph{Oberwolfach test} \cite{oberwolfach_random}, and so we can weaken the assumption to $x$ being Oberwolfach random.

\section{Discussion for Ergodic Theorists}\label{ergodic_people}

In the context of analytic questions like the ergodic theorem, matters of computability are mostly questions of continuity and uniformity: the computability of a given property usually turns on whether it depends in an appropriately uniform way on the inputs.  Algorithmic randomness gives a precise way of characterizing how sensitive the ergodic theorem is to small changes in the underlying function.

The paradigm is to distinguish different sets of measure $0$, viewed as an intersection $A=\cap_i A_i$, by characterizing how the sets $A_i$ depend on the given data (in the case of the ergodic theorem, the function $f$).  The two main types of algorithmic randomness that have been studied in this context thus far are \ml randomness and Schnorr randomness.  In both cases, we ask that the sets $A_i$ be unions $A_i=\cup_j A_{i,j}$ of sets where $A_{i,j}$ is determined based on a finite amount of information about the orbit of $f$ (in particular, the dependence of $A_{i,j}$ on $f$ and $T$ should be continuous).  (To put it another way, we ask that the set of exceptional points which violate the conclusion of the ergodic theorem be contained in a $G_\delta$ which depends on $f$ in a uniform way.)  The distinction between the two notions is that in Schnorr randomness, $\mu(A_i)=2^{-i}$, while in \ml randomness, we only know $\mu(A_i)\leq 2^{-i}$.  This means that in the Schnorr random case, a finite amount of information about the orbit of $f$ suffices to limit the density of $A_i$ outside of a small set (take $J$ large enough that $\mu(\cup_{j\leq J}A_{i,j})$ is within $\epsilon$ of $2^{-i}$; then no set disjoint from $\cup_{i\leq J}A_{i,j}$ contains more than $\epsilon$ of $A_i$).  In the \ml random case, this is not possible: if $\mu(A_i)\leq 2^{-i}-\epsilon$, no finite amount of information about the orbit of $f$ can rule out the possibility that some $A_{i,j}$ with very large $j$ will add new points of measure $\epsilon$.  In particular, while we can identify sets which do belong to $A_i$, finite information about the orbit of $f$ does not tell us much about which points are not in $A_i$.

The two classes of functions discussed in this paper are the computable and the lower semi-computable ones; these are closely analogous to the continuous and lower semi-continuous functions.  Unsurprisingly, both the passage from computable to lower semi-computable functions and the passage from ergodic to nonergodic transformations make it harder to finitely characterize points violating the conclusion of the ergodic theorem.  Perhaps more surprising, both changes generate precisely the same result: if a point violates the conclusion of the ergodic theorem for a computable function with a nonergodic transformation, we can construct a lower semi-computable function with an ergodic transformation for which the point violates the conclusion of the ergodic theorem, and vice versa.

The main question we leave open is what happens when we make both changes: what characterizes the points which violate the conclusion of the ergodic theorem for lower-semi computable functions with nonergodic transformations? The answer is likely to turn on purely ergodic theoretic questions about the sensitivity of upcrossings, such as the hypothesis we use above.
\begin{question}
  Let $(X,\mu)$ be a metric space and let $T:X\rightarrow X$ be measure preserving.  Let $\epsilon>0$ be given. Is there a bound $K$ (depending on $T$ and on $\epsilon$) such that for any $f$ with $||f||_{L^\infty}\leq 1$ and any sequence $0\leq h_0\leq h_1\leq \cdots\leq h_n$ with $||h_n||_{L^\infty}<\epsilon$,
\[\int \sup_n\tau(x,f+h_n,\alpha,\beta)dx<K\]
where $\tau(x,g,\alpha,\beta)$ is the number of upcrossings from below $\alpha$ to above $\beta$ starting with the point $x$?
\end{question}

\bibliographystyle{plain}
\bibliography{Alg}

\end{document}